\newcommand{\compilationdestination}{arxiv}
\providecommand*{\toclevel@example}{0}
\providecommand*{\toclevel@theorem}{0}
\providecommand*{\toclevel@remark}{0}
\providecommand*{\toclevel@definition}{0}
\providecommand*{\toclevel@remark}{0}
\providecommand*{\toclevel@lemma}{0}
\DeclareMathSymbol{\sminus}{\mathbin}{AMSa}{"39}
\def\scalesize{0.25}
\def\myraisebox{-4.5pt}
\def\arrowlength{4.5pt}
\def\shortarrowlength{4pt}
\def\King{\raisebox{\myraisebox}{\begin{tikzpicture}[scale = \scalesize]
 \node(O) at (0, 0){};
 \node(E) at (1, 0){};
 \node(SE) at (1, -1){};
 \node(S) at (0, -1){};
 \node(SW) at (-1, -1){};
 \node(W) at (-1, 0){};
 \node(NW) at (-1, 1){};
 \node(N) at (0, 1){};
 \node(NE) at (1, 1){};
 \draw[help lines] (-1, -1) grid (1, 1);
\draw [blue, fill] (0,0) circle (4pt);
\draw[blue, thick, line cap = round, ->, arrows={-Latex[length=\arrowlength]}] (O.center) to (NW.center);
\draw[blue, thick, line cap = round, ->, arrows={-Latex[length=\arrowlength]}] (O.center) to (NE.center);
\draw[blue, thick, line cap = round, ->, arrows={-Latex[length=\arrowlength]}] (O.center) to (SE.center);
\draw[blue, thick, line cap = round, ->, arrows={-Latex[length=\arrowlength]}] (O.center) to (SW.center);
\draw[blue, thick, line cap = round, ->, arrows={-Latex[length=\shortarrowlength]}] (O.center) to (E.center);
\draw[blue, thick, line cap = round, ->, arrows={-Latex[length=\shortarrowlength]}] (O.center) to (W.center);
\draw[blue, thick, line cap = round, ->, arrows={-Latex[length=\shortarrowlength]}] (O.center) to (N.center);
\draw[blue, thick, line cap = round, ->, arrows={-Latex[length=\shortarrowlength]}] (O.center) to (S.center);
\end{tikzpicture}}}
\def\Diagonal{\raisebox{\myraisebox}{\begin{tikzpicture}[scale = \scalesize]
 \node(O) at (0, 0){};
 \node(E) at (1, 0){};
 \node(SE) at (1, -1){};
 \node(S) at (0, -1){};
 \node(SW) at (-1, -1){};
 \node(W) at (-1, 0){};
 \node(NW) at (-1, 1){};
 \node(N) at (0, 1){};
 \node(NE) at (1, 1){};
 \draw[help lines] (-1, -1) grid (1, 1);
\draw[blue, thick, line cap = round, ->, arrows={-Latex[length=\arrowlength]}] (O.center) to (NW.center);
\draw[blue, thick, line cap = round, ->, arrows={-Latex[length=\arrowlength]}] (O.center) to (NE.center);
\draw[blue, thick, line cap = round, ->, arrows={-Latex[length=\arrowlength]}] (O.center) to (SE.center);
\draw[blue, thick, line cap = round, ->, arrows={-Latex[length=\arrowlength]}] (O.center) to (SW.center);
\end{tikzpicture}}}
\def\Diabolo{\raisebox{\myraisebox}{\begin{tikzpicture}[scale = \scalesize]
 \node(O) at (0, 0){};
 \node(E) at (1, 0){};
 \node(SE) at (1, -1){};
 \node(S) at (0, -1){};
 \node(SW) at (-1, -1){};
 \node(W) at (-1, 0){};
 \node(NW) at (-1, 1){};
 \node(N) at (0, 1){};
 \node(NE) at (1, 1){};
 \draw[help lines] (-1, -1) grid (1, 1);
\draw [blue, fill] (0,0) circle (4pt);
\draw[blue, thick, line cap = round, ->, arrows={-Latex[length=\arrowlength]}] (O.center) to (NW.center);
\draw[blue, thick, line cap = round, ->, arrows={-Latex[length=\arrowlength]}] (O.center) to (NE.center);
\draw[blue, thick, line cap = round, ->, arrows={-Latex[length=\arrowlength]}] (O.center) to (SE.center);
\draw[blue, thick, line cap = round, ->, arrows={-Latex[length=\arrowlength]}] (O.center) to (SW.center);
\draw[blue, thick, line cap = round, ->, arrows={-Latex[length=\shortarrowlength]}] (O.center) to (N.center);
\draw[blue, thick, line cap = round, ->, arrows={-Latex[length=\shortarrowlength]}] (O.center) to (S.center);
\end{tikzpicture}}}
\pgfplotsset{compat=1.18}
\definecolor{darkgreen}{rgb}{0,0.4,0}
\definecolor{green}{rgb}{0,0.4,0}
\definecolor{BrickRed}{rgb}{0.65,0.08,0}
\newcommand{\OEISs}[1]{\href{http://oeis.org/#1}{#1}}
\newcommand{\N}{\ensuremath{\mathbb{N}}}
\newcommand{\R}{\ensuremath{\mathbb{R}}}
\newcommand{\Seq}{\text{\textsc{Seq}}}
\DeclareMathOperator{\nb}{\text{NegBin}}
\DeclareMathOperator{\ray}{\text{Rayleigh}}
\DeclareMathOperator{\Nc}{\mathcal{N}}
\newcommand{\claw}{\ensuremath{\xrightarrow{\mathcal{L}}}}
\def\P{{\mathbb {P}}}
\def\E{{\mathbb {E}}}
\newcommand{\cF}{c_F^{}}
\newcommand{\cG}{c_G^{}}
\newcommand{\cH}{c_H^{}}
\newcommand{\rhoF}{\rho_F^{}}
\newcommand{\rhoG}{\rho_G^{}}
\newcommand{\rhoH}{\rho_H^{}}
\newcommand{\tauH}{\tau_H^{}}
\newcommand{\lambdaF}{\lambda_F^{}}
\newcommand{\lambdaG}{\lambda_G^{}}
\newcommand{\lambdaH}{\lambda_H^{}}
\newcommand{\lambdaM}{\lambda_M^{}}
\newcommand{\Xnq}{\ensuremath{X_n(q)}}
\newcommand{\q}{\texorpdfstring{$q$}{q}}
\def\rhoq{\rho}
\title{Composition Schemes: \texorpdfstring{\boldmath $q$}{q}-Enumerations and Phase Transitions in Gibbs Models}
\author{Cyril Banderier}{Laboratoire d'Informatique de Paris Nord, Universit\'{e} Sorbonne Paris Nord, Villetaneuse, France \and \url{https://lipn.univ-paris13.fr/~banderier/}}{}{https://orcid.org/0000-0003-0755-3022}{supported by the French-Austrian PHC Amadeus project ``Asymptotic behaviour of combinatorial structures''.}
\author{Markus Kuba}{Department Applied Mathematics \& Physics, University of Applied Sciences - Technikum Wien, Austria \and \url{https://www.dmg.tuwien.ac.at/kuba/}}{}{https://orcid.org/0000-0001-7188-6601}{}
\author{Stephan Wagner}{Institute of Discrete Mathematics, TU Graz, Austria \and Department of Mathematics, Uppsala Universitet, Sweden \and \url{https://www.math.tugraz.at/~wagner/}}{}{https://orcid.org/0000-0001-5533-2764}{supported by the Swedish research council (VR), grant 2022-04030.}
\author{Michael Wallner}{Institut f\"{u}r Diskrete Mathematik und Geometrie, TU Wien, Austria \and \url{https://dmg.tuwien.ac.at/mwallner/}}{}{https://orcid.org/0000-0001-8581-449X}{supported by the Austrian Science Fund (FWF):~P~34142 and OeAD WTZ project FR~01/2023.}
\authorrunning{C. Banderier, M. Kuba, S. Wagner, and M. Wallner}
\keywords{Composition schemes, \q-enumeration, generating functions, Gibbs distribution, phase transitions, limit laws, Mittag-Leffler distribution, chi distribution, Boltzmann distribution}
\begin{document}

\sloppy

\maketitle
\vspace{0.5\baselineskip}
\enlargethispage{-1.6\baselineskip}
\begin{abstract}
Composition schemes are ubiquitous in combinatorics, statistical mechanics and probability theory. 
We give a unifying explanation to various phenomena 
observed in the combinatorial and statistical physics literature in the context of~$q$-enumeration
(this is a model where objects with a parameter of value $k$ have a Gibbs measure/Boltzmann weight $q^k$).
For structures enumerated by a composition scheme, we prove a phase transition for any parameter having such a Gibbs measure: 
for a critical value $q=q_c$, the limit law of the parameter is a two-parameter Mittag-Leffler distribution,
while it is Gaussian in the supercritical regime ($q>q_c$),  
and it is a Boltzmann distribution in the subcritical regime ($0<q<q_c$).
We apply our results to fundamental statistics of lattice paths and quarter-plane walks. We also explain previously observed limit laws for pattern-restricted permutations, 
and a phenomenon uncovered by Krattenthaler for the wall contacts in watermelons.
\end{abstract}

\section{Introduction}
\subsection{\texorpdfstring{\boldmath $q$}{q}-enumeration and Gibbs distributions}
Let $\mathcal{T}$ be a family of combinatorial objects,  let $|\cdot|$ denote the size of objects,
and let $\mathcal{X}\colon \mathcal{T}\to\N$ be a statistic defined on~$\mathcal{T}$. The statistic $\mathcal{X}$ on objects of $\mathcal{T}$ of size $n$ can be encoded by the sum
\begin{equation}
\label{eqn:partFunc1}
f_n(q)=\sum_{T\in\mathcal{T}: |T|=n}q^{\mathcal{X}(T)}.
\end{equation}
This sum reduces for $q=1$ to the total number $f_n(1)=f_n$ of objects of size $n$. 
In combinatorics, for any given $q\in\R$,  it is called the \emph{$q$-enumeration} of $\mathcal{T}$ of size $n$ with respect to $\mathcal{X}$
(see, e.g., \cite{Aigner2021,HopkinsLazarLinusson2021}). In the language of statistical mechanics,  $f_n(q)$ is a partition function with Boltzmann weight $q$. 
The bivariate generating function $F(z,q)$ is then defined as
\begin{equation}
\label{eqn:gf1}
F(z,q)=\sum_{T\in\mathcal{T}}z^{|T|}q^{\mathcal{X}(T)}=\sum_{n\ge 0} f_n(q) z^n =\sum_{n\ge 0}\sum_{k\ge 0}f_{n,k}z^n q^k.
\end{equation}
Here $f_{n,k}$ denotes the number of objects of $\mathcal{T}$ of size $n$ for which $\mathcal{X}$ equals $k$. 
It is usual to associate with the statistic $\mathcal{X}$ the random variables $X_n$, $n\ge 1$, defined as
\begin{equation}
\label{eqn:defXn}
\P(X_n=k)=\frac{f_{n,k}}{f_n}=\frac{[q^k]f_n(q)}{f_n(1)}=\frac{[z^n q^k]F(z,q)}{[z^n]F(z,1)},
\end{equation}
such that each object from $\mathcal{T}$ of size $n$ is equally likely. 
The associated probability generating function is given by
$\E(q^{X_n})=\frac{[z^n]F(z,q)}{[z^n]F(z,1)}$. 
In Equation~\eqref{eqn:defXn}, the reader is probably used to consider $q$ as a formal variable, 
but in this work, like in statistical mechanics, we shall consider $q$ as an adjustable parameter (weight $\in\R_{+}$) of the underlying combinatorial and physical structures.
This is also the spirit of the Boltzmann sampling method~\cite{Boltz1}, 
where $q$ is tuned to minimize the number of rejection steps in the sampling algorithm.

More precisely, in this article, we put a Gibbs measure on the statistic $\mathcal{X}$; that is, one has the following probabilistic model.
\begin{definition}[Gibbs distribution]
\label{def:inducedRV}
Let a family $\mathcal{T}$ of combinatorial objects and a statistic $\mathcal{X}\colon \mathcal{T}\to\N$ be given.  
For real $q>0$, the Gibbs distribution of this statistic is the law of the random variable $\Xnq$ with probability mass function
\[
\P(\Xnq=k)=\frac{f_{n,k}q^k }{f_n(q)},\quad k\ge 0.
\]
In terms of the probability generating function $p(v)=\E(v^{X_n(1)})$, we have
$\E(v^{\Xnq})=\frac{p(vq)}{p(q)}$.\end{definition}
A well-known example in probability theory is the Mallows distribution~\cite{Mallows1957} on permutations with respect to the inversion statistic. 

In many applications, one is interested in the limit distribution of $X_n(q)$, which depends on the value of $q>0$; see, e.g., \cite{BEO1998,BGR2009,BEO2001,CiucuKrattenthaler2002,KrattenthalerGuttmannViennot2000,Krattenthaler2006,OP2019,TOR2014}.
Let us pinpoint the result of Krattenthaler~\cite{Krattenthaler2006}, who uncovered a phase transition in the normalized \emph{mean number} of wall contacts at $q=2$ in watermelons. 
Using methods from analytic combinatorics, we will show
that similar phase transitions naturally occur in a great many instances, not only with respect to the expectation but also for the \emph{limit laws}. 
We will use the framework of composition schemes, which often provide a direct and unifying way to explain why phase transitions occur~\cite{BFSS2001,BKW2021a,FS2009}. In Section~\ref{sec:Theorem}, we establish in which way the phase transitions 
in the Gibbs model depend on the value of~$q$. 
Particular instances of similar phenomena have been observed in~\cite{ChelikavadaPanzo2023,Wu2022}. We give further examples
in Sections~\ref{sec:Applications} and~\ref{sec:Extensions}.

\subsection{Composition schemes and Gibbs distributions}
Functional composition schemes such as 
$F(z)=G\big(H(z)\big)$ are of great importance in combinatorics~\cite{BFSS2001,BKW2021a,FS2009} and probability theory~\cite{Stufler2022}.
The main focus is to analyse probabilistic properties of compositions like
\begin{equation}
\label{Scheme:1}
F(z,u)=G\big(u H(z)\big)\end{equation}
as a multitude of parameters $\mathcal{X}$ can be modelled in this way. Here $u$ marks the so-called size of the core, i.e., the involved $G$-component; see~\cite{BFSS2001,BKW2021a,FS2009}. The distribution of the corresponding random variable $X_n$ is then readily defined by
\begin{equation*}
\P(X_n=k)= \frac{[z^n u^k]F(z,u)}{[z^n]F(z,1)}.
\end{equation*}
Structurally, such schemes are at the heart of many fascinating phase transition phenomena (analytically corresponding, e.g., to coalescing saddle points 
or to confluence of singularities), related to the Gibbs measure in statistical physics and probability theory~\cite{Stufler2022}. 

First, we relate composition schemes to $q$-enumeration.
\begin{lemma}[Composition schemes and Gibbs distributions]
\label{lem:basicProb}
Let a combinatorial statistic $\mathcal{X}$ with bivariate generating function $F(z,u)$ be given. 
Then, for real $q>0$ the Gibbs distribution of $\mathcal{X}$ has a probability mass function given in terms of $F$ by
\begin{equation*}
\P(\Xnq=k)= \frac{[z^n u^k]F(z,q u )}{[z^n]F(z,q)}.
\end{equation*}
\end{lemma}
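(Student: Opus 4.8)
The plan is to reduce the claimed identity to a one-line coefficient extraction from the definitions. First I would write the bivariate generating function explicitly as $F(z,u)=\sum_{n,k\ge 0}f_{n,k}z^nu^k$, with $f_{n,k}$ the number of objects $T\in\mathcal{T}$ with $|T|=n$ and $\mathcal{X}(T)=k$, that is, \eqref{eqn:gf1} with the formal variable $q$ renamed to $u$.

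Next I would substitute $u\mapsto qu$. Since $q>0$ is a fixed real number, this is a legitimate reparametrisation of the formal power series in $z$ (for each fixed $n$, the coefficient of $z^n$ is the one-variable series $\sum_k f_{n,k}u^k$, into which we plug $qu$), and it gives $F(z,qu)=\sum_{n,k}f_{n,k}q^k z^nu^k$, hence $[z^nu^k]F(z,qu)=f_{n,k}q^k$. In the same way, evaluating at $u=q$ yields $[z^n]F(z,q)=\sum_{k\ge 0}f_{n,k}q^k=f_n(q)$, the partition function of \eqref{eqn:partFunc1}.

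Dividing the two quantities, the right-hand side of the asserted formula becomes $\frac{f_{n,k}q^k}{f_n(q)}$, which is exactly $\P(\Xnq=k)$ by Definition~\ref{def:inducedRV}; this completes the argument. Alternatively, and perhaps more transparently, one can argue at the level of probability generating functions: \eqref{eqn:defXn} gives $p(v):=\E(v^{X_n(1)})=\frac{[z^n]F(z,v)}{[z^n]F(z,1)}$, so Definition~\ref{def:inducedRV} yields $\E(v^{\Xnq})=\frac{p(vq)}{p(q)}=\frac{[z^n]F(z,vq)}{[z^n]F(z,q)}$, and extracting $[v^k]$ on both sides recovers the claim.

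The statement is elementary and I do not foresee a genuine obstacle; the only (routine) point to spell out is that substituting $u\mapsto qu$ commutes with extracting the coefficient of $z^n$, which holds because that operation only touches the single-variable slice $\sum_k f_{n,k}u^k$ of $F$.
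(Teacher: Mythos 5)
Your argument is correct and is precisely the immediate coefficient-extraction computation the paper relies on (the lemma is in fact stated without proof there, being a direct consequence of Definition~\ref{def:inducedRV} and Equation~\eqref{eqn:gf1}): $[z^nu^k]F(z,qu)=f_{n,k}q^k$ and $[z^n]F(z,q)=f_n(q)$, so the ratio is $\P(\Xnq=k)$. Both your coefficient-level argument and the probability-generating-function variant are valid; nothing is missing.
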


\begin{figure}[htb!]
\newcommand{\myheight}{3.1cm}
\centering
\includegraphics[height=\myheight]{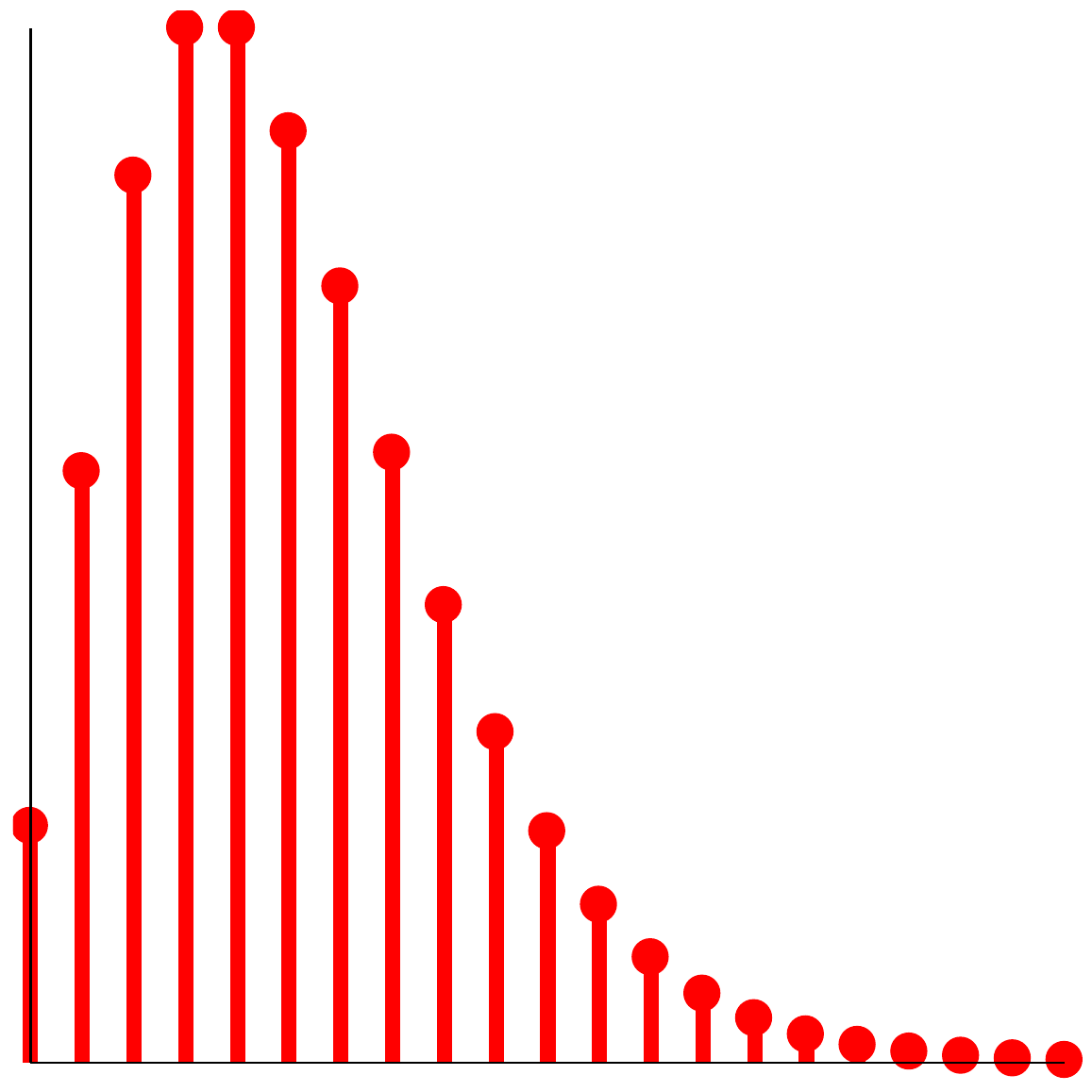}  \quad
\includegraphics[height=\myheight]{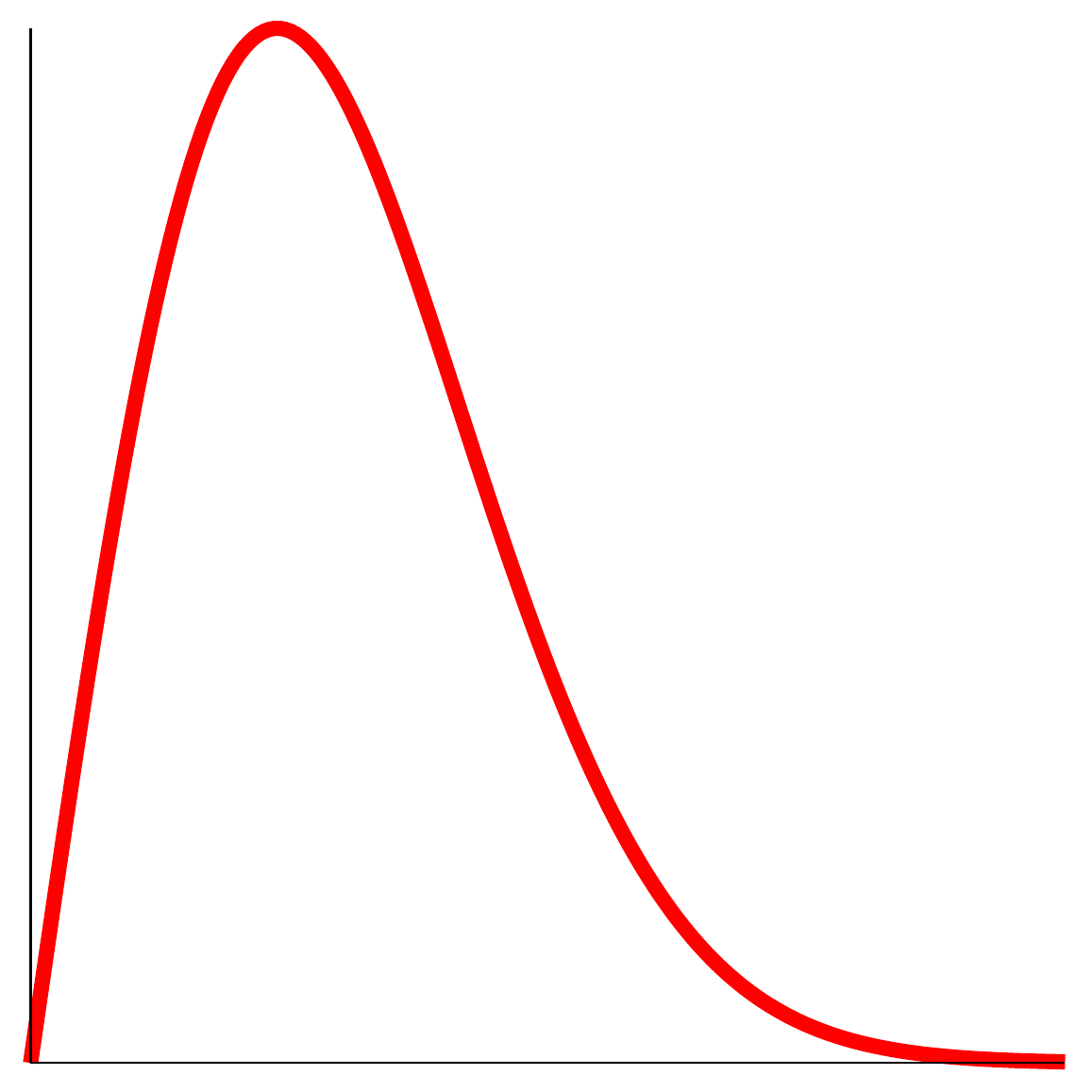} 
\quad
\includegraphics[height=\myheight]{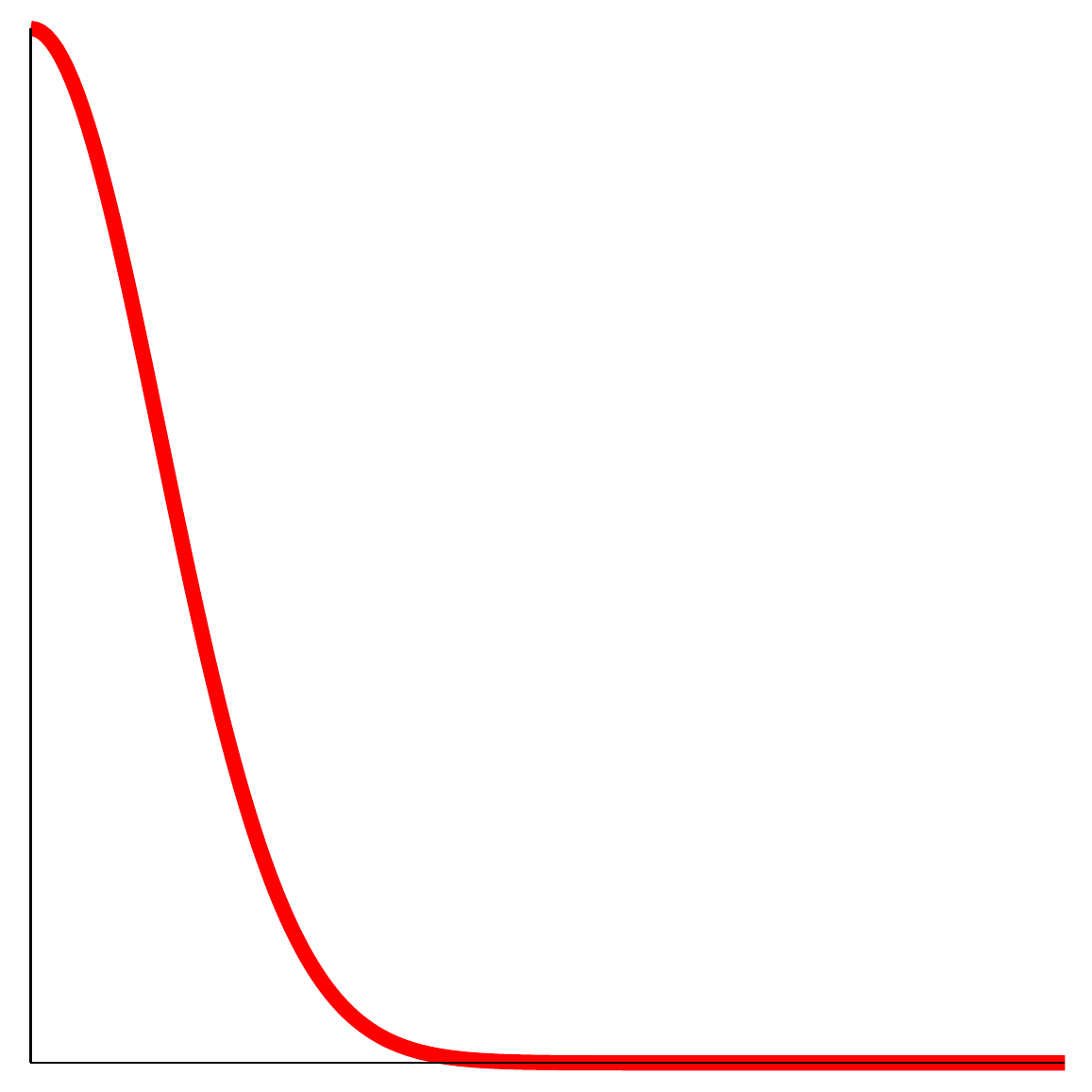}  
\quad
\includegraphics[height=\myheight]{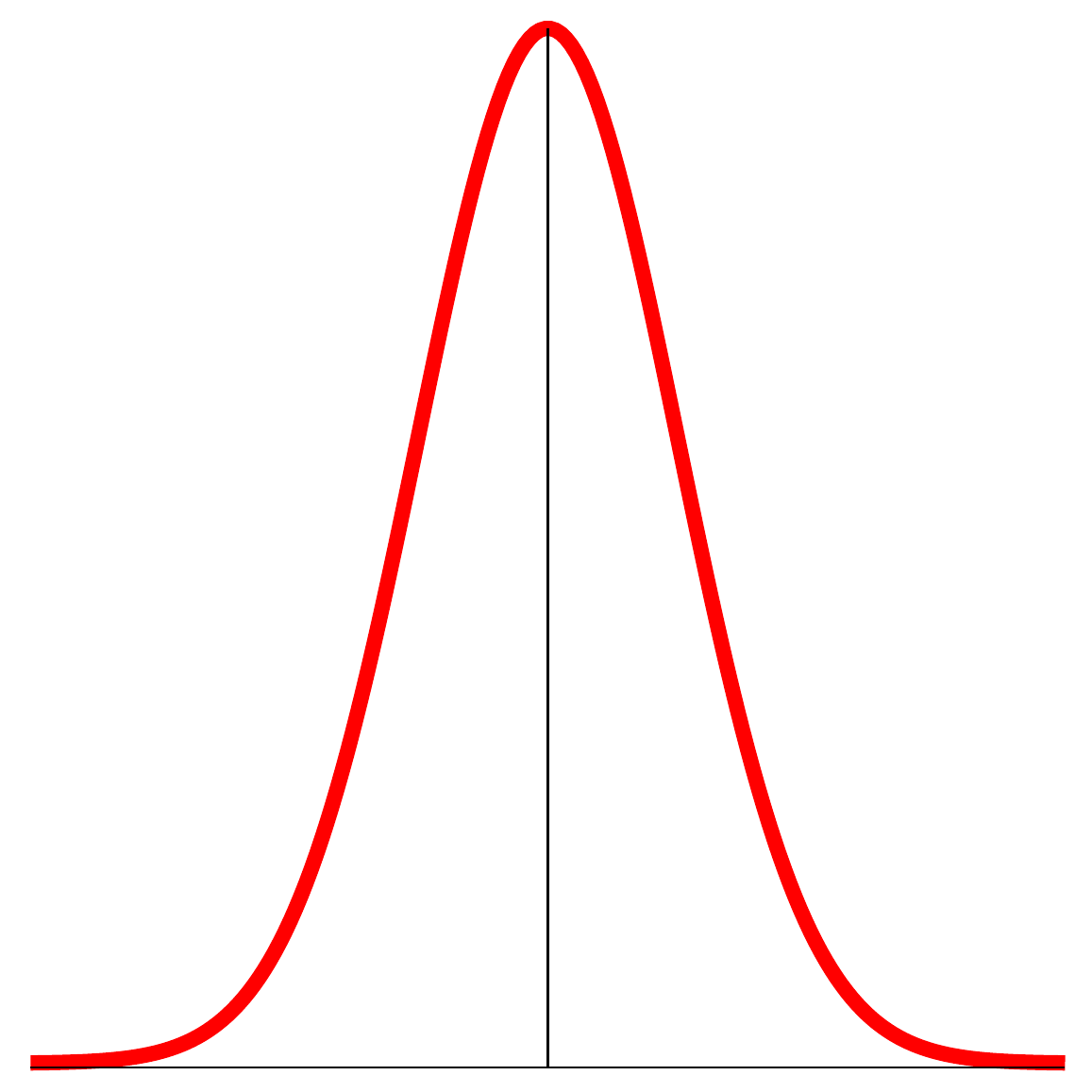}  
\caption{Under the Gibbs model, we get different limit distributions for the random variable $X_n(q)$ associated with statistics counted by a composition scheme: 
from left to right, Boltzmann (e.g., the negative binomial), Mittag-Leffler (e.g., Rayleigh), chi  (e.g., half-normal), Gaussian.
We establish their universality in the next sections.} 
\label{fig:zoo}
\end{figure}

In many (combinatorial) applications of composition schemes, the following assumptions hold~\cite{BKW2021a}: all involved generating functions $G(z)$, $H(z)$, and $F(z)$ have nonnegative coefficients and aperiodic support, are analytic in a $\Delta$-domain with a finite radius of convergence $\rhoF$ and possess singular expansions of the form
\begin{equation}
F(z)=P\left(1-\frac{z}{\rhoF}\right) +  \cF \cdot \left(1-\frac{z}{\rhoF}\right)^{\lambdaF} \left(1+o(1)\right), 
\label{EqSA}
\end{equation}
where $\lambdaF \in \R\backslash \{0,1,2,\dots\}$ is called the \emph{singular exponent} (at $z=\rhoF$),
and where $P(x) \in \mathbb{C}[x]$ is a polynomial (of degree $\geq 1$  for $\lambdaF>1$, of degree $0$ for $0<\lambdaF<1$,
and $P=0$ for $\lambdaF<0$). 
Then, singularity analysis~\cite{FS2009} can often be used to compute the asymptotics of the coefficients.  
Note that the sign of the constant $\cF$ depends on the Puiseux exponent $\lambdaF$; see~\cite[Equation~(8)]{BKW2021a}.

\smallskip

For the reader's convenience, we recall the classical terminology for composition schemes in the following definition.
\begin{definition}[Classification of composition schemes]
\label{def:classification}	
Let $\tauH=H(\rhoH)$. A composition scheme $F(z)=G\big(H(z)\big)$ is called
\emph{subcritical} if it satisfies $\tauH<\rhoG$, \emph{critical} if it satisfies $\tauH=\rhoG$, and \emph{supercritical} if it satisfies $\tauH>\rhoG$.
\end{definition}
We note that each individual class of critical schemes leads to very diverse combinatorial and probabilistic phenomena~\cite{BFSS2001,BD2015,BKW2021a,FS2009,Stufler2022}. 

\smallskip

One typical example of a combinatorial construction  of shape $F(z,u)=G\big(u H(z)\big)$
is given by the sequence construction.
\begin{example}[Sequence of objects]
Given a combinatorial structure $\mathcal{H}$, let $\mathcal{F}=\Seq(\mathcal{H})$.
Using the variable $u$ to encode the core size (i.e., the number of $\mathcal H$ components), one has $F(z,u)=1/(1-u H(z))$.
This is a composition scheme with $G(z)=1/(1-z)$, with $\rhoG=1$ and $\lambdaG=-1$.
\end{example}

We observe that the parameter $q$ can thus directly influence the nature of the underlying singular structure
when the total mass (obtained by $u=1$) changes from $[z^n]F(z,1)$ to $[z^n]F(z,q)$. 
Let us make this more precise for the following class of functions that includes the sequence construction.

\begin{lemma}[Nature and asymptotics of~$q$-enumerated composition schemes] 
\label{Lemma:asympt}
Let a composition scheme $F(z,u)=G\big(u H(z)\big)$ with singular exponents $\lambdaG<0$ and $0<\lambdaH<1$ be given.
Let  $q_c:=\frac{\rhoG}{\tauH }=\frac{\rhoG}{H(\rhoH)}>0$.
The nature of the scheme then splits into three different regimes:
\begin{itemize}
\item for $0<q<q_c$, the scheme  is subcritical;
\item for $q=q_c$, the scheme  is critical; 
\item for $q>q_c$, the scheme  is supercritical. 
\end{itemize}
Accordingly, if one imposes a Gibbs measure on the number of $\mathcal{H}$-components,
this  impacts the asymptotics of their $q$-enumeration $f_n(q)$ as follows: 
\[
f_n(q) \sim 
\begin{cases}
\frac{\cH q G'(q\tauH)}{\Gamma(-\lambdaH)}\rho_H^{-n}n^{-\lambdaH-1}, & \text{ for } \quad 0<q<q_c,\\[1mm]
\cG\Big({-}\frac{\cH}{\tauH}\Big)^{\lambdaG} \frac1{\Gamma(-\lambdaH\lambdaG)}\rho_H^{-n}n^{-\lambdaH\lambdaG-1}, & \text{ for } \quad q=q_c,\\[1mm]
\cG\Big(\frac{q \rho H'(\rho)}{\rhoG}\Big)^{\lambdaG}\frac1{\Gamma(-\lambdaG)}\rho^{-n}n^{-\lambdaG-1}, & \text{ for } \quad q>q_c,
\end{cases}
\]
where, in the last case, $\rho$ is the unique solution of $qH(\rho)=\rhoG$ in the interval $(0,\rhoH)$.
\end{lemma}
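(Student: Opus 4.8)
The plan is, for each fixed real $q>0$, to analyse the dominant singularity of the univariate generating function $z\mapsto F(z,q)=G\bigl(qH(z)\bigr)$ (whose $n$-th Taylor coefficient is $f_n(q)$), to read off its local singular behaviour, and then to apply singularity analysis~\cite{FS2009}.

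First I would locate the singularity and prove the trichotomy. Since $H$ has nonnegative coefficients, it is continuous and strictly increasing on $[0,\rhoH]$ with $H(\rhoH)=\tauH$, so by the intermediate value theorem the equation $qH(\rho)=\rhoG$ has a solution $\rho$ in $(0,\rhoH)$ exactly when $q\tauH>\rhoG$, has the boundary solution $\rho=\rhoH$ exactly when $q\tauH=\rhoG$, and has no solution in $[0,\rhoH]$ when $q\tauH<\rhoG$; these three cases are precisely $q>q_c$, $q=q_c$, $q<q_c$. Since $q\tauH$ versus $\rhoG$ is exactly ``$\tau$ of the inner function'' versus ``$\rho$ of the outer function'' for the rescaled scheme $G(qH(z))$, Definition~\ref{def:classification} then yields that the scheme is subcritical for $q<q_c$, critical for $q=q_c$, and supercritical for $q>q_c$.

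Next I would compute the local singular expansion in each regime. In the subcritical case the dominant singularity is $\rhoH$ (for $|z|\le\rhoH$ one has $|qH(z)|\le q\tauH<\rhoG$, so $G$ stays analytic, and aperiodicity makes $\rhoH$ unique); substituting the expansion $H(z)=\tauH+\cH(1-z/\rhoH)^{\lambdaH}(1+o(1))$ from~\eqref{EqSA} (the polynomial part of $H$ is the constant $\tauH$ because $0<\lambdaH<1$) into the Taylor expansion of $G$ at the regular point $q\tauH$ gives $F(z,q)=G(q\tauH)+q\cH G'(q\tauH)(1-z/\rhoH)^{\lambdaH}(1+o(1))$, with singular exponent $\lambdaH$ and $G'(q\tauH)>0$. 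In the critical case $q_c\tauH=\rhoG$, so $qH(z)$ reaches $\rhoG$ as $z\to\rhoH$; since $\lambdaG<0$ the polynomial part $P$ in~\eqref{EqSA} for $G$ vanishes, and using $q_c/\rhoG=1/\tauH$ one finds $1-q_cH(z)/\rhoG=-(\cH/\tauH)(1-z/\rhoH)^{\lambdaH}(1+o(1))$, which fed into the singular expansion of $G$ gives $F(z,q_c)=\cG(-\cH/\tauH)^{\lambdaG}(1-z/\rhoH)^{\lambdaH\lambdaG}(1+o(1))$, with singular exponent $\lambdaH\lambdaG<0$ (note $\cH<0$, so the base is a positive real). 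In the supercritical case the dominant singularity is the confluent point $\rho\in(0,\rhoH)$ with $qH(\rho)=\rhoG$, where $H$ is analytic; a first-order Taylor expansion gives $1-qH(z)/\rhoG=\tfrac{q\rho H'(\rho)}{\rhoG}(1-z/\rho)(1+o(1))$, and substituting into~\eqref{EqSA} for $G$ (again $P=0$) gives $F(z,q)=\cG\bigl(\tfrac{q\rho H'(\rho)}{\rhoG}\bigr)^{\lambdaG}(1-z/\rho)^{\lambdaG}(1+o(1))$, with singular exponent $\lambdaG$. In every case $F(\cdot,q)$ is $\Delta$-analytic at its unique dominant singularity, so the transfer theorem~\cite{FS2009} applied to these expansions, together with $[z^n](1-z/r)^{\beta}\sim r^{-n}n^{-\beta-1}/\Gamma(-\beta)$ for $\beta$ not a nonnegative integer, yields the three asymptotic formulas in the statement.

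The delicate points I would need to handle carefully are twofold. First, composing singular expansions must be justified: substituting the expansion of $H$ --- or its first-order Taylor polynomial, in the supercritical case --- into that of $G$ has to produce a genuine singular expansion with a controlled error term, for which I would appeal to the standard composition machinery of~\cite{BFSS2001,BKW2021a,FS2009}. Second, and this is the step I expect to be the main obstacle, one must verify the $\Delta$-analyticity of the composition: in the supercritical case by analytically continuing $H$ a little beyond $\rho$ (possible since $\rho<\rhoH$) and checking that $z\mapsto qH(z)$, conformal near $\rho$ because $H'(\rho)>0$, maps a small enough $\Delta$-domain at $\rho$ into the $\Delta$-domain of $G$ at $\rhoG$; in the critical case by using that $z\mapsto 1-q_cH(z)/\rhoG$ behaves like a constant multiple of $(1-z/\rhoH)^{\lambdaH}$ with $\lambdaH<1$, so that a sufficiently narrow $\Delta$-domain at $\rhoH$ is carried inside the $\Delta$-domain of $G$. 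Uniqueness of the dominant singularity, hence the absence of oscillating contributions, follows from the aperiodic support hypothesis in all three regimes.
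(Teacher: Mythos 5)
Your proposal is correct and follows essentially the same route as the paper's proof: the trichotomy is obtained by comparing $q\tauH$ with $\rhoG$ via monotonicity of $H$ on the positive real axis, and in each regime the same local expansion (Taylor of $G$ at the regular point $q\tauH$, substitution of the singular expansion of $H$ into that of $G$ at criticality, and a first-order Taylor expansion of $H$ at the confluent point $\rho$ in the supercritical case) is transferred by singularity analysis. The only difference is that you spell out the $\Delta$-analyticity and composition-of-expansions justifications that the paper delegates to the cited references, which is a welcome but not divergent addition.
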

\begin{proof}[Proof of Lemma~\ref{Lemma:asympt}]
First, we turn to the nature of the scheme $F_q(z) = G(qH(z))$.
Since $F_q(z)$ has nonnegative coefficients, Pringsheim's Theorem~\cite{FS2009} implies that there is a singularity on the real axis.
Further, $H(z)$ is monotonically increasing on the real axis from $0$ to $\rhoH$, where it attains the value $\tauH = H(\rhoH) < \infty$.
Thus, the nature of $F_q(z)$ depends on the relation between the singularity~$\rhoG$ of $G(z)$ and $q \tauH$ as claimed. 
Next, we look at the singular expansions. 
We start with those of $G(z)$ and $H(z)$.
By the assumptions on $\lambdaG$ and $\lambdaH$ we have
\begin{align}
&&G(z) &\sim \cG\left(1-\frac{z}{\rhoG}\right)^{\lambdaG} 
&& \text{ and } &
H(z) &\sim \tauH + \cH\left(1-\frac{z}{\rhoH}\right)^{\lambdaH}. \label{eqn:expansion-q-H}
&&
\end{align}

In the subcritical regime $0 < q < \frac{\rhoG}{\tauH}$,  
the outer function $G(z)$ is analytic at $q\tauH$ and we combine its expansion with 
the singular expansion of $H(z)$ around $z=\rhoH$ to obtain
\begin{align*}
F_q(z) &= G(q\tauH) + G'(q\tauH)\big(q H(z)-q\tauH\big)(1+o(1))\\
&\sim G(q\tauH)+\cH q G'(q\tauH)\big(1-\frac{z}{\rhoH}\big)^{\lambdaH}.
\end{align*}
Basic singularity analysis~\cite{FS2009} provides the stated expansion.

For the critical regime $q=\frac{\rhoG}{\tauH}$ we obtain
\begin{align*}
F_q(z) 
&\sim \cG\left(1 - q\frac{\tauH}{\rhoG} -  q\frac{\cH}{\rhoG} \left(1-\frac{z}{\rhoH}\right)^{\lambdaH}\right)^{\lambdaG} 
=\cG\left({-}\frac{\cH}{\tauH}\right)^{\lambdaG}\left(1-\frac{z}{\rhoH}\right)^{\lambdaH\lambdaG},
\end{align*}
which leads to the desired result. 

Finally, in the supercritical regime $q > \frac{\rhoG}{\tauH}$, there exists a unique $0<\rho<\rhoH$ such that $q H(\rho)=\rhoG$. 
As $\rho<\rhoG$, we may expand $H(z)$ around $\rho$.
This leads to
\[
q H(z) = q H(\rho)+q H'(\rho)(z-\rho) +\mathcal{O}\left((z-\rho)^2\right).
\]
Note that by the positivity of the coefficients of $H$ one has $H'(\rho)\neq 0$. 
Thus, we have
\[
F_q(z) \sim \cG\left(\frac{q \rho H'(\rho)}{\rhoG}\right)^{\lambdaG} \left(1-\frac{z}{\rho}\right)^{\lambdaG},
\]
and the asymptotic formula for $f_n(q)$ follows by singularity analysis.
\end{proof}

\section{Main theorem: Gibbs models and phase transitions with respect to \texorpdfstring{\boldmath $q$}{q}}\label{sec:Theorem}

The following result is our main theorem. It describes the dependency of the limit law on the parameter $q$.
In this extended abstract, we present only the case of sequence-like schemes.

\begin{theorem}[Gibbs distribution and phase transitions of sequence-like schemes]
\label{the:compoMain}
Let a composition scheme $F(z,u)=G\big(u H(z)\big)$, with singular exponents $\lambdaG < 0$ and $0<\lambdaH<1$, be given. 
Then, the Gibbs distribution of $X_n=X_n(q)$ associated with $F(z,qv)$ has the following limit laws and phase transition diagram that depend on $q_c=\frac{\rhoG}{\tauH }$:
\begin{table}[!htb]
\centering
\begin{tabular}{l@{\hskip 10mm }ccc}
\toprule
Parameter $q$ &$0<q<q_c$ & $q=q_c$ &  $q>q_c$\\
\midrule
Regime & subcritical & critical & supercritical \\ Singularity & $\rhoH$ & $\rhoH$ & $\rhoq<\rhoH$ \\
Singular exponent & $Z^{\lambdaH}$ & $Z^{\lambdaG\lambdaH}$ & $Z^{\lambdaG}$ \\ 
Limit law & discrete & continuous&  continuous \\
& (Boltzmann) & (Mittag-Leffler)  & (Gaussian)\\
\bottomrule
\end{tabular}
\end{table}

\begin{itemize}
\item In the subcritical regime $0<q<q_c$, the random variable $X_n-1$ converges to a discrete distribution, a Boltzmann distribution 
$\mathcal{B}_{G'}(q\tau_h)$     with explicit probability generating function~given~by:
\[
\P(X_n-1=k)\to [v^k] \frac{ G'(v q\tauH)}{G'(q\tauH)}.
\]
In particular, if $G(z)=\frac{1}{(1-z)^{m}}$, the limit law of $X_n-1$ is a negative binomial distribution $\text{NegBin}(m+1,1-q\tauH)$, where $X \sim \text{NegBin}(r,p)$ is defined by $\P(X=k) = \binom{k+r-1}{k} p^r (1-p)^k$ for $k \geq 0$.
\item In the critical regime $q=q_c$, the random variable $X_n/n^{\lambdaH}$ converges in distribution:
\[
\frac{-\cH X_n}{\tauH \, n^{\lambdaH}}\claw X,
\]
where the random variable 
$X$ follows the two-parameter Mittag-Leffler distribution $\operatorname{ML}(\alpha,\beta)$ (with $\alpha:=\lambdaH$ and $\beta:=-\lambdaG\lambdaH$)
that is associated with the density $f_X(x)=\frac{\Gamma(\beta +1)}{\alpha \Gamma(\frac{\beta}{\alpha}+1)}\sum_{n=1}^{\infty} \frac{(-1)^n}{n!\Gamma(-n\alpha)}x^{n +\beta/\alpha-1}$
 determined by its moments $\E(X^r) = \frac{\Gamma(\beta) \Gamma(r+\beta/\alpha) }{\Gamma(\beta/\alpha) \Gamma(\alpha r +\beta)}$.\\
In particular, for $\lambdaG = -1$ and $\lambdaH = \frac12$, $X$ follows the Rayleigh distribution $\mathcal{R}(\sqrt 2)$, where $X \sim \mathcal{R}(\sigma)$ is defined by the density $\frac{x}{\sigma^2} e^{-x^2/(2\sigma^2)}$ for $x \geq 0$.
\item In the supercritical regime $q>q_c$, the centred and normalized random variable $(X_n-\mu_n)/\sigma_n$ converges in distribution to a standard normal distribution $\mathcal{N}(0,1)$, 
where mean~$\mu_n$ and variance~$\sigma_n^2$ are both of order $n$: we have, with $\rho\equiv \rho(q)$ given by $qH(\rho) = \rhoG$,
\[\mu_n \sim \frac{\rhoG}{q \rhoq H'(\rhoq)}\cdot n, \quad \sigma_n^2 \sim \Big( \frac{\rho_G^2}{q^2 \rhoq^2H'(\rhoq)^2} - \frac{\rho_G}{q \rhoq H'(\rhoq)} + \frac{\rho_G^2 H''(\rhoq)}{q^2 \rhoq H'(\rhoq)^3} \Big) \cdot n.\]
\end{itemize}
In particular, the expected value of $X_n$ is for $n \to \infty$ asymptotically equivalent to
\[
\E(X_n)\sim
\begin{cases}
1 + \frac{q \tauH G''(q \tauH)}{G'(q \tauH)}, & \text{ for } \quad 0<q<q_c,\\
\frac{\lambdaG\tauH\Gamma(-\lambdaG\lambdaH)}{\cH \Gamma((1-\lambdaG)\lambdaH)} \cdot n^{\lambdaH}, & \text{ for } \quad q=q_c,\\
\frac{\rhoG}{q \rhoq H'(\rhoq)}\cdot n, & \text{ for } \quad q>q_c.
\end{cases}
\]
\end{theorem}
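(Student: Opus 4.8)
The plan is to derive each of the three regimes of the limit law from the corresponding singular expansion of $F_q(z)=G(qH(z))$ established in Lemma~\ref{Lemma:asympt}, using the quasi-powers / singularity-analysis toolbox of analytic combinatorics. The starting point in every case is the bivariate generating function $F(z,qv)=G\big(qvH(z)\big)$, whose $[z^n]$-coefficient, normalized by $f_n(q)=[z^n]F(z,q)$, is the probability generating function $\E(v^{X_n(q)})$. So the whole proof is a study of how the singular behaviour of $z\mapsto G(qvH(z))$ deforms as $v$ varies near $1$, and a transfer of that behaviour to coefficient asymptotics. I would handle the three regimes separately, since they correspond to structurally different analytic mechanisms (analytic outer function, confluence of singularities, movable dominant singularity).

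\textbf{Subcritical regime.} Here, for $v$ in a neighbourhood of $1$, $qvH(\rhoH)=qv\tauH<\rhoG$, so $G$ is still analytic at the relevant point and the dominant singularity stays at $z=\rhoH$ with exponent $\lambdaH$. I would expand $F(z,qv)=G(qv\tauH)+\cH qv\,G'(qv\tauH)(1-z/\rhoH)^{\lambdaH}(1+o(1))$ exactly as in the proof of Lemma~\ref{Lemma:asympt} but keeping $v$ as a parameter; singularity analysis then gives $[z^n]F(z,qv)\sim \frac{\cH qv\,G'(qv\tauH)}{\Gamma(-\lambdaH)}\rho_H^{-n}n^{-\lambdaH-1}$. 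Dividing by the same asymptotics at $v=1$ (Lemma~\ref{Lemma:asympt}) kills all the $n$-dependent factors and leaves $\E(v^{X_n(q)})\to \frac{v\,G'(qv\tauH)}{G'(q\tauH)}$, i.e.\ $X_n-1$ converges to the claimed Boltzmann law $\mathcal B_{G'}(q\tauH)$; convergence of PGFs on a neighbourhood of $1$ implies convergence in distribution and of all moments, whence $\E(X_n)\to 1+q\tauH G''(q\tauH)/G'(q\tauH)$ by differentiating at $v=1$. The negative-binomial specialization is a direct computation with $G(z)=(1-z)^{-m}$.

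\textbf{Critical regime.} At $q=q_c$, I would not use a fixed singular expansion but rather track the moments. From the critical-case expansion in Lemma~\ref{Lemma:asympt}, applied with $q$ replaced by $q_cv$ where useful, or more robustly by computing $\E(X_n^{\underline r})$ via $r$-fold differentiation of $\E(v^{X_n})$ at $v=1$: each derivative $\partial_v^r F(z,q_cv)\big|_{v=1}$ is again of the form (analytic)$\,\times\,(1-z/\rhoH)^{(\lambdaG-r)\lambdaH}$ up to lower-order terms, because differentiating $G(q_cvH(z))$ in $v$ lowers the $G$-exponent by one each time. Singularity analysis then yields $\E(X_n^{\underline r})\sim C_r\, n^{r\lambdaH}$ with $C_r=(-\tauH/\cH)^r\,\Gamma(-\lambdaG\lambdaH)/\Gamma((r-\lambdaG)\lambdaH)$, so the normalized variable $Y_n=-\cH X_n/(\tauH n^{\lambdaH})$ has $\E(Y_n^{\underline r})\to \Gamma(\beta)\Gamma(r+\beta/\alpha)/(\Gamma(\beta/\alpha)\Gamma(\alpha r+\beta))$ with $\alpha=\lambdaH$, $\beta=-\lambdaG\lambdaH$. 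Since these are exactly the moments of the two-parameter Mittag-Leffler distribution, and that distribution is determined by its moments (they grow slowly enough for Carleman's condition, which I would cite), the method of moments gives $Y_n\claw X\sim\operatorname{ML}(\alpha,\beta)$. The $\lambdaG=-1,\lambdaH=1/2$ case reduces to the Rayleigh moments, and the asserted $\E(X_n)$ is the $r=1$ instance.

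\textbf{Supercritical regime.} Here I would invoke the quasi-powers theorem (Hwang). For $v$ near $1$, the equation $qvH(\rho(v))=\rhoG$ defines an analytic movable singularity $\rho(v)$ with $\rho(1)=\rhoq$, and near it $F(z,qv)\sim \cG\big(qv\rho(v)H'(\rho(v))/\rhoG\big)^{\lambdaG}(1-z/\rho(v))^{\lambdaG}$; hence $[z^n]F(z,qv)\sim c(v)\,\rho(v)^{-n}n^{-\lambdaG-1}$ with $c(v),\rho(v)$ analytic and nonvanishing at $v=1$. Therefore $\E(v^{X_n(q)})=\frac{[z^n]F(z,qv)}{[z^n]F(z,q)}\sim \frac{c(v)}{c(1)}\big(\rho(1)/\rho(v)\big)^{n}$, which is a quasi-power: Hwang's theorem gives asymptotic normality with $\mu_n\sim n\cdot(-\rho'(1)/\rho(1))$ and $\sigma_n^2\sim n\cdot\big(-\rho'(1)/\rho(1)-\rho''(1)/\rho(1)+(\rho'(1)/\rho(1))^2\big)$ (the usual formula via $B(v)=-\log(\rho(v)/\rho(1))$, so $\mu_n\sim nB'(1)$, $\sigma_n^2\sim n(B''(1)+B'(1))$). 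Implicit differentiation of $qvH(\rho(v))=\rhoG$ at $v=1$ gives $\rho'(1)=-\rhoq/(\,qH(\rhoq)\cdot$ wait, more simply $\rho'(1)=-H(\rhoq)/H'(\rhoq)=-\rhoG/(qH'(\rhoq))$, whence $B'(1)=\rhoG/(q\rhoq H'(\rhoq))$, matching $\mu_n$; a second differentiation produces the stated variance after routine algebra, and the $H''$-term is exactly the contribution of $\rho''(1)$.

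\textbf{Main obstacle.} The delicate point is the critical regime: one must justify that the method of moments applies (i.e.\ that $\operatorname{ML}(\alpha,\beta)$ is moment-determinate) and, more importantly, that the lower-order terms in the singular expansions of the successive $v$-derivatives really are negligible \emph{uniformly} enough to transfer termwise to coefficient asymptotics; this is where the hypothesis $\lambdaH\in(0,1)$ and the $\Delta$-analyticity are essential, and where one should be careful that differentiating in $v$ does not create exponents in the exceptional set $\{0,1,2,\dots\}$ (it does not, since $\lambdaG<0$ forces $(\lambdaG-r)\lambdaH<0$ for all $r\ge 0$). The supercritical case is mainly bookkeeping in Hwang's framework; the subcritical case is essentially immediate once the parametrized expansion is written down.
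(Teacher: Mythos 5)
Your proposal is correct and, for the subcritical and supercritical regimes, follows essentially the same route as the paper: a parametrized singular expansion of $F(z,qv)$ giving pointwise convergence of the probability generating function in the subcritical case, and a movable singularity $\rho(v)$ fed into Hwang's quasi-powers theorem in the supercritical case. The one genuine difference is the critical regime: the paper simply invokes Theorem~4.1 of~\cite{BKW2021a}, whereas you re-derive the Mittag-Leffler limit from scratch by the method of moments, computing the factorial moments from the singular exponents $(\lambdaG-r)\lambdaH$ of the $v$-derivatives of $G(q_cvH(z))$ and checking moment-determinacy. That is a legitimate, self-contained alternative (and is in essence the mechanism behind the cited theorem), at the cost of having to justify the uniformity of the transfer for each $r$ --- a point you correctly identify as the delicate one. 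Two small repairs: (i) your intermediate constant $C_r$ is missing the factor $\Gamma(r-\lambdaG)/\Gamma(-\lambdaG)$ contributed by $G^{(r)}$, i.e.\ one should have $C_r=\bigl(\tfrac{\tauH}{-\cH}\bigr)^{r}\tfrac{\Gamma(r-\lambdaG)}{\Gamma(-\lambdaG)}\cdot\tfrac{\Gamma(-\lambdaG\lambdaH)}{\Gamma((r-\lambdaG)\lambdaH)}$; without it your $r=1$ case does not reproduce the stated $\E(X_n)$ at criticality, although the final moment limit you write down is the correct one. (ii) In the supercritical case, Hwang's theorem only yields a nondegenerate Gaussian if the variance constant $B''(1)+B'(1)$ is nonzero; the paper devotes a separate proposition to proving this strict positivity (via the implicit equation for $\rho$), and your argument should either include that verification or explicitly defer to it.
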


\begin{proof}[Proof of Theorem~\ref{the:compoMain} (Sketch)]
For $q<q_c$ we are in the subcritical regime and follow the proof of Lemma~\ref{Lemma:asympt}. 
We build on the results of~\cite{FS2009,BKW2021a}. We expand $F(z,qv)$ for $0<v<1$
to obtain
$
F(z,qv) 
\sim G(qv\tauH) + \cH qv G'(qv\tauH)\left(1-\frac{z}{\rhoH}\right)^{\lambdaH}.
$
This implies that the probability generating function satisfies
\[
\lim_{n\to \infty} \E(v ^{X_n(q)})=\lim_{n\to \infty}\frac{[z^n]F(z,qv)}{f_n(q)}
=\frac{v G'(q v\tauH)}{G'(q\tauH)},
\]
leading to a Boltzmann distribution $\mathcal{B}_{G'}(q \tauH)$.
In particular, for $G(z)=1/(1-z)$ we have $G'(z) = 1/(1-z)^2$, leading to a negative binomial distribution.

\smallskip

For $q=q_c$ we are at the critical value and we thus apply~\cite[Theorem 4.1]{BKW2021a} with $\lambdaG < 0$ and $0 < \lambdaH < 1$. 
This yields the stated limit law, as discussed in~\cite[Remark 4.2]{BKW2021a}.

\smallskip

In the supercritical regime for $q>q_c$, our claim results from the approach of Bender~\cite[Propositions~IX.6 and~IX.7]{FS2009}.
The singularity $\rho = \rho(qv)$ becomes an analytic function of $v$ while the nature of the singularity remains unchanged for $v$ in a sufficiently small neighbourhood of $1$. 
In particular, the expected value, the variance, and the normal limit law follow by an application of Hwang's quasi-power theorem~\cite{Hwang1998,FS2009}.
\end{proof}
\begin{figure}[htb]
\newcommand{\myheight}{4.4cm}
\centering
\includegraphics[height=\myheight]{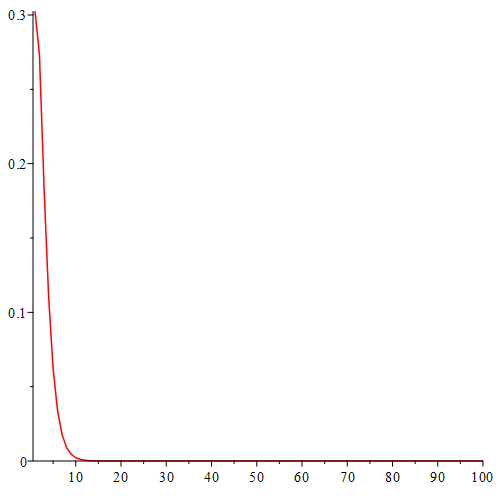}\quad\includegraphics[height=\myheight]{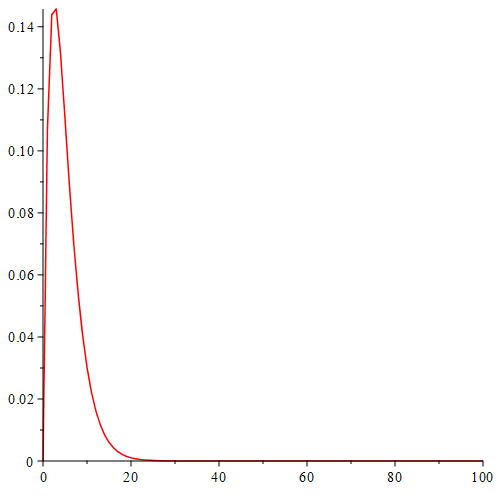}\quad\includegraphics[height=\myheight]{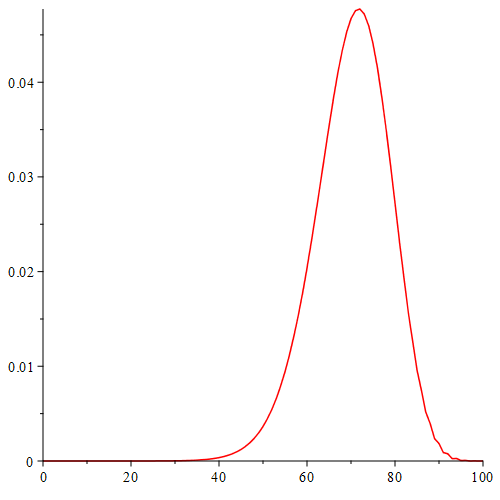}\\
$q=1$   \hspace{4cm}  $q=1.5$  \hspace{4cm}    $q=3$
\caption{The distribution (with  the histogram interpolated to a curve) 
of returns to 0 in Motzkin excursions (as analysed in Section~3), for walks of length $n=100$, under the Gibbs measure of weight $q$.
While the distribution for a finite $n$ evolves continuously when one increases $q$,
it exhibits a phase transition when one  increases~$n$ (at $q_c=3/2$ for this example).
As stated in Theorem~\ref{the:compoMain}, 
it converges  to a negative binomial distribution for $q<q_c$, to a Rayleigh distribution for $q=q_c$, and to a Gaussian distribution for $q>q_c$.}
\end{figure}
It is worth pointing out that the constant in the asymptotics of $\sigma_n^2$ in the supercritical case is always strictly positive. Thus degenerate limit laws are not possible. This is shown in the following. The proof follows the lines of~\cite[Proposition 23]{Kropf2017}.

\begin{proposition}[Positivity of the variance constant]
In the supercritical regime $q > q_c$ of Theorem~\ref{the:compoMain}, we have
\[\frac{\rho_G^2}{q^2 \rhoq^2H'(\rhoq)^2} - \frac{\rho_G}{q \rhoq H'(\rhoq)} + \frac{\rho_G^2 H''(\rhoq)}{q^2 \rhoq H'(\rhoq)^3} > 0,\]
thus $\frac{\sigma_n^2}{n}$ converges to a constant that is strictly positive.
\end{proposition}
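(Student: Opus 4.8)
The plan is to first eliminate $\rhoG$ using the relation $qH(\rhoq)=\rhoG$, i.e.\ $\rhoG/q=H(\rhoq)$. Substituting this into the displayed expression turns the claimed inequality into
\[
\frac{H(\rhoq)^2}{\rhoq^2 H'(\rhoq)^2} - \frac{H(\rhoq)}{\rhoq H'(\rhoq)} + \frac{H(\rhoq)^2 H''(\rhoq)}{\rhoq H'(\rhoq)^3}
= a\Bigl(a - 1 + \frac{H(\rhoq)H''(\rhoq)}{H'(\rhoq)^2}\Bigr) > 0,
\qquad a:=\frac{H(\rhoq)}{\rhoq H'(\rhoq)}.
\]
Since $H$ has nonnegative coefficients and is non-constant, and $0<\rhoq<\rhoH$, we have $H(\rhoq)>0$ and $H'(\rhoq)>0$ (as already noted in the proof of Lemma~\ref{Lemma:asympt}), so $a>0$. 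It then remains to show that the second factor is positive; clearing the positive denominator $\rhoq H'(\rhoq)^2$, this amounts to
\[
H(\rhoq)H'(\rhoq) + \rhoq H(\rhoq)H''(\rhoq) - \rhoq H'(\rhoq)^2 > 0 .
\]

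The key step is to recognize this left-hand side as a positive multiple of $\psi''(\log\rhoq)$, where $\psi(z):=\log H(e^z)$ (well defined and smooth near $z=\log\rhoq$ because $\rhoq<\rhoH$). A routine differentiation gives, with $u=e^z$,
\[
\psi''(z) = \frac{u}{H(u)^2}\Bigl(H(u)H'(u) + u\,H(u)H''(u) - u\,H'(u)^2\Bigr),
\]
so the inequality to be proved is exactly $\psi''(\log\rhoq)>0$. This is the classical log-convexity of exponential sums with nonnegative coefficients: writing $H(e^z)=\sum_{k\ge 0}h_k e^{kz}$ and introducing the probability weights $p_k=h_k e^{kz}/H(e^z)$, one computes $\psi'(z)=\sum_k k\,p_k$ and $\psi''(z)=\sum_k k^2 p_k-\bigl(\sum_k k\,p_k\bigr)^2\ge 0$, a variance (equivalently, Cauchy--Schwarz), which vanishes only if a single coefficient $h_k$ is nonzero. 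I would then observe that $H$ cannot be a monomial under the standing hypotheses: $0<\lambdaH<1$ together with $\rhoH<\infty$ (so that $\tauH=H(\rhoH)<\infty$, as used in Lemma~\ref{Lemma:asympt}) forces $H$ to have a genuine non-polar singularity at $\rhoH$, hence infinitely many nonzero coefficients. Therefore $\psi''(\log\rhoq)>0$, the variance constant equals $a$ times a strictly positive quantity, and $\sigma_n^2/n$ converges to a strictly positive limit.

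The only genuine work lies in the bookkeeping of the first two steps: checking the factorization, and — more importantly — the differentiation identity that exhibits the three-term expression as a positive multiple of $(\log H(e^z))''$. Once that identity is in hand, positivity is immediate from log-convexity, and the argument parallels that of~\cite[Proposition~23]{Kropf2017}.
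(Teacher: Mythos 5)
Your proof is correct, but it takes a genuinely different route from the paper's. You argue directly: after eliminating $\rho_G$ via $qH(\rho)=\rho_G$, the expression factors as $a\bigl(a-1+\tfrac{H(\rho)H''(\rho)}{H'(\rho)^2}\bigr)$ with $a=\tfrac{H(\rho)}{\rho H'(\rho)}>0$, and the second factor is a positive multiple of $\tfrac{d^2}{dz^2}\log H(e^z)$ at $z=\log\rho$, i.e.\ the variance of the discrete law $p_k\propto h_k\rho^k$; this is strictly positive unless $H$ is a monomial, which the standing hypotheses (finite radius of convergence $\rho_H$, singular exponent $\lambda_H\in(0,1)$) exclude. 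I checked the factorization and the differentiation identity $\psi''(z)=\tfrac{u}{H(u)^2}\bigl(H(u)H'(u)+uH(u)H''(u)-uH'(u)^2\bigr)$, and both are correct, as is the reduction of the second factor to this quantity after clearing the positive denominator $\rho H'(\rho)^2$. The paper instead argues indirectly and probabilistically: it sets $a(t)=\log\rho(q)-\log\rho(qe^t)$, takes the smallest $k\ge 2$ with $a^{(k)}(0)\neq 0$, and uses the quasi-power asymptotics together with L\'evy's continuity theorem to show that $k\ge 3$ (and likewise the case where all higher derivatives vanish, forcing $H(z)$ to be a power function) leads to a degenerate limit with moment generating function identically $1$, a contradiction; for $k=2$ it identifies $a''(0)$ with the displayed expression by implicit differentiation of $qH(\rho(q))=\rho_G$. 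Your approach is more elementary and self-contained: it needs no limit theorem, no case analysis over $k$, and it exposes the positivity as plain log-convexity of a power series with nonnegative coefficients. The paper's approach buys a conceptual explanation (a zero constant would contradict the very existence of the quasi-power expansion) and performs, along the way, the computation tying the constant to $a''(0)$, which your argument takes as given from Theorem~\ref{the:compoMain}; since the Proposition's content is precisely the positivity, that is a legitimate division of labour.
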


\begin{proof}
For $v$ in a suitably small neighbourhood of $1$, the singularity $\rho = \rho(qv)$ of $F(z,qv)$ becomes an analytic function of $v$ as mentioned in the proof of Theorem~\ref{the:compoMain}. It is implicitly determined by $qv H(\rho(qv)) = \rho_G$. Singularity analysis gives us the asymptotic formula
\[f_n(qv) = [z^n] F(z,qv) \sim \cG\Big(\frac{qv \rho(qv) H'(\rho(qv))}{\rhoG}\Big)^{\lambdaG}\frac1{\Gamma(-\lambdaG)}\rho(qv)^{-n}n^{-\lambdaG-1};\]
see Lemma~\ref{Lemma:asympt}. For the moment generating function of the variable $X_n(q)$, this means that
\begin{equation}\label{eq:mgf-asymp}
\E(e^{t X_n(q)}) = \frac{f_n(qe^t)}{f_n(q)} \sim \exp \big( a(t) n + b(t) \big),
\end{equation}
uniformly for $t$ in a suitable neighbourhood of $0$, where 
\[a(t) = \log \rho(q) - \log \rho(q e^t),\qquad b(t) = \lambdaG \Big( t + \log \frac{\rho(qe^t) H'(\rho(qe^t))}{\rho(q) H'(\rho(q))} \Big).\]
Note that $a(0) = b(0) = 0$ and $a'(0) = -\frac{q \rho'(q)}{\rho(q)} = \frac{\rhoG}{q \rho(q) H'(\rho(q))}$ (the latter identity follows by implicit differentiation from $q H(\rho(q)) = \rhoG$). Now let $k$ be the smallest positive integer greater than or equal to $2$ such that $a^{(k)}(0) \neq 0$, provided that such an integer exists. Substitute $t = s n^{-1/k}$ in~\eqref{eq:mgf-asymp} and apply the Taylor expansion of $a(t)$ and $b(t)$ around $0$ to obtain
\[\E \Big( \exp \Big( \frac{s (X_n(q)-a'(0)n)}{n^{1/k}} \Big)\Big) = \exp \Big( \frac{a^{(k)}(0)}{k!} s^k + o(1) \Big).\]
By L\'evy’s continuity theorem, this would mean that $ \frac{X_n(q)-a'(0)n}{n^{1/k}}$ converges in distribution to a random variable $X$ with moment generating function 
\[M(s) = \E (e^{sX}) = \exp \Big( \frac{a^{(k)}(0)}{k!} s^k \Big).\]
However, such a random variable can only exist for $k=2$: otherwise, it would have second moment~$0$, thus be almost surely equal to $0$ and have moment generating function $M(s) = 1$. If $k = 2$, then
\begin{align*}
0 \neq a''(0) &= -\frac{q^2 \rho''(q)}{\rho(q)}+\frac{q^2 \rho'(q)^2}{\rho(q)^2}-\frac{q \rho'(q)}{\rho(q)} \\
&= \frac{\rho_G^2}{q^2 \rho(q)^2H'(\rho(q))^2} - \frac{\rho_G}{q \rho(q) H'(\rho(q))} + \frac{\rho_G^2 H''(\rho(q))}{q^2 \rho(q) H'(\rho(q))^3},
\end{align*}
and we are done (here, implicit differentiation of $q H(\rho(q)) = \rhoG$ is used again in the second step). It only remains to exclude the possibility that there is no $k \geq 2$ such that $a^{(k)}(0) \neq 0$. Then $a(t)$ must be a linear function: $a(t) = \kappa t$ for some constant $\kappa$, thus $\rho(q e^t) = \rho(q) e^{-\kappa t}$. This would have to hold for $t$ in a neighbourhood of $0$. But in view of the implicit equation $q e^t H(\rho(q e^t)) = \rhoG$, this would imply that the function $H$ is given by $H(z) = \frac{\rhoG}{q} (z/\rho(q))^{1/k}$, contradicting our assumptions.
\end{proof}

\section{Applications: phase transitions from negative binomial to Rayleigh to Gaussian}\label{sec:Applications}
We start our list of applications with the case of fixed-point-biased permutations avoiding a pattern of length three, 
whose asymptotic behaviour was recently considered in~\cite{ChelikavadaPanzo2023}. 
Then we consider several instructive examples from the theory of lattice paths, namely
the returns to zero in Dyck bridges and Dyck excursions, as well as in Motzkin bridges and Motzkin excursions. 
Thereby, we add to the existing classical results (that is, for the uniform distribution) 
the phase transitions stemming from $q$-enumeration (that is, for the Gibbs distribution on the parameter). 
We also consider the number of boundary interactions in some quarter-plane walk models,
and the number of contacts between two paths in some watermelon models.

\subsection{Fixed-point-biased permutations avoiding a pattern of length three}
Let $\mathcal{S}_n(p)$ be the set of permutations of $1,2,\ldots,n$ that avoid  a given pattern $p$ 
(where, as usual, the elements of the pattern $p$ need not be contiguous in the permutation; see Figure~\ref{fig:permutation}). 
The generating function counting the statistic ${\operatorname{fp}(\sigma)}$ (number of fixed points) of such permutations for the pattern $321$ was obtained by 
Vella~\cite[Theorem~2.13]{Vella2003} 
and for the other two patterns $132$ and $213$ by 
Elizalde~\cite[Theorem~3.5]{Elizalde04}.
It is in all three cases equal to
\[
F(z,u)=1+\sum_{n=1}^{\infty}\, \sum_{\sigma\in\mathcal{S}_n(p)}u^{\operatorname{fp}(\sigma)}z^n=\frac{2}{1+2(1-u)z+\sqrt{1-4z}}.
\]
\begin{figure}[hbt!]
\centering
\begin{tikzpicture}[scale=0.26]	
\newcommand{\msize}{13}
	\draw[step=1cm,lightgray,very thin] (1,1) grid (\msize,\msize);
		
\newcommand{\addpoint}[1]{\filldraw[black] #1+(0.5,0.5) circle (9pt);
	}
	
\newcommand{\addpointred}[1]{\filldraw[red] #1+(0.5,0.5) circle (8pt);
	}
	\addpoint{(1,2)}
	\addpoint{(2,1)}
	\addpointred{(3,3)}
	\addpoint{(4,5)}
	\addpoint{(5,6)}
	\addpoint{(6,4)}
	\addpointred{(7,7)}
	\addpointred{(8,8)}
	\addpoint{(9,10)}
	\addpoint{(10,12)}
	\addpoint{(11,9)}  
	\addpoint{(12,11)}
	\end{tikzpicture} 
\caption{An example of a 321-avoiding permutation of length $12$ with $3$ fixed points marked by red dots.
(A permutation $\pi=(\pi_1,\dots,\pi_n)$ avoids the pattern $p=321$ if there is no triplet $1\leq i<j<k \leq n$ such that $\pi_k<\pi_j<\pi_i$.)} 
\label{fig:permutation}
\end{figure} 

Later, Chelikavada and Panzo~\cite{ChelikavadaPanzo2023} used this generating function to establish a phase transition which we rederive now.
\begin{theorem}[Phase transition for fixed-point-biased permutations] 
The limit Gibbs distribution of~the fixed-point statistic in  permutations avoiding any given pattern $p \in \{132, 321, 213\}$ has a phase transition
with critical value $q_c=3$:
\begin{center}
\begin{tabular}{l@{\hskip 10mm }ccc}
\toprule
Parameter $q$ &$0<q<3$ & $q=3$ &  $q>3$\\
\midrule
Limit law of $X_n(q)$ & Negative binomial  & Rayleigh & Gaussian\\
& $\nb(2,1-q/3)$ & $\mathcal{R}(\sqrt{2})$ & $\mathcal{N}(0,1)$ \\  
\bottomrule
\end{tabular}
\end{center}
\end{theorem}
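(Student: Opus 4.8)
The plan is to recast the common generating function of the three pattern classes as a sequence-type composition scheme — times a benign analytic prefactor — and then read off the three limit laws directly from Theorem~\ref{the:compoMain}. The first observation is that $1/F(z,u)$ is \emph{affine} in $u$: with $A(z):=\frac{1+2z+\sqrt{1-4z}}{2}$ one has $F(z,u)=\frac{1}{A(z)-uz}$, hence
\[
F(z,u)=P(z)\cdot G\bigl(uH(z)\bigr),\qquad G(y)=\frac{1}{1-y},\quad H(z)=\frac{z}{A(z)},\quad P(z)=\frac{1}{A(z)}.
\]
Using the classical identity $\sqrt{1-4z}=1-2zC(z)$ (with $C$ the Catalan generating function) one also gets $A(z)=1-\bigl(zC(z)\bigr)^2$, which makes it transparent that $A$, and hence $P$ and $H$, is analytic and zero-free on $|z|<\tfrac14$ with a square-root branch point at $z=\rhoH:=\tfrac14$; moreover $F(z,1)=1/(A(z)-z)=C(z)$, recovering $f_n(1)=C_n$.

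Expanding at $z=\tfrac14$ gives $A(z)=\tfrac34+\tfrac12(1-4z)^{1/2}+O(1-4z)$, and therefore
\[
H(z)=\tfrac13-\tfrac29(1-4z)^{1/2}+O(1-4z),\qquad P(z)=\tfrac43-\tfrac89(1-4z)^{1/2}+O(1-4z).
\]
So $\rhoG=1$, $\lambdaG=-1<0$; $\rhoH=\tfrac14$, $\lambdaH=\tfrac12\in(0,1)$, $\tauH=\tfrac13$, $\cH=-\tfrac29$; whence $q_c=\rhoG/\tauH=3$, as claimed. Since $H(z)=zP(z)$, one has the two structural identities $P(\rhoH)=\tauH/\rhoH$ and $c_P=\cH/\rhoH$ for the singular coefficient $c_P$ of $P$; these are exactly what make the prefactor harmless below.

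Now I would run the trichotomy of Theorem~\ref{the:compoMain}, checking in each regime that the factor $P$ does not alter the conclusion. For $0<q<3$ the dominant singularity of $F(z,qv)$ remains $z=\tfrac14$; expanding $F(z,qv)=P(z)/(1-qvH(z))$ there, the two identities above force the coefficient of $(1-4z)^{1/2}$ to collapse to $\tfrac{\cH/\rhoH}{(1-qv\tauH)^2}$, so singularity analysis together with $\E(v^{X_n(q)})=[z^n]F(z,qv)/[z^n]F(z,q)$ yields $\E(v^{X_n(q)})\to\tfrac{(1-q/3)^2}{(1-qv/3)^2}$, the probability generating function of $\nb(2,1-q/3)$. (This is the subcritical Boltzmann law of Theorem~\ref{the:compoMain} for $G(y)=1/(1-y)$, the prefactor $P=H/z$ precisely absorbing the ``$X_n\mapsto X_n-1$'' shift appearing there.) For $q=3$ one has $q\tauH=1$, so $A(z)-3z$ vanishes to order $(1-4z)^{1/2}$ at $z=\tfrac14$ and $F(z,3)\sim 2(1-4z)^{-1/2}$; the critical case of Theorem~\ref{the:compoMain} with $(\lambdaG,\lambdaH)=(-1,\tfrac12)$ then governs the limit once the analytic factor $P(z)\to\tfrac43$ is factored out (cancelling in the ratio), giving $\operatorname{ML}(\tfrac12,\tfrac12)=\mathcal{R}(\sqrt2)$, concretely $\tfrac{2X_n}{3\sqrt n}\claw\mathcal{R}(\sqrt2)$. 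For $q>3$ we have $A(\tfrac14)-\tfrac q4=\tfrac{3-q}{4}<0<A(0)$ and $(A(z)-qz)'=A'(z)-q<0$ on $(0,\tfrac14)$, so $A(z)-qz$ has a unique simple zero $\rho=\rho(q)\in(0,\tfrac14)$: thus $F(z,q)$ has a simple pole at $\rho<\rhoH$, with $P$ analytic and nonzero there. Perturbing $q\mapsto qe^{t}$, the zero $\rho(qe^{t})$ of $A(\cdot)-qe^{t}(\cdot)$ is analytic in $t$ near $0$, and the quasi-powers machinery for meromorphic perturbations (Bender, \cite[Propositions~IX.6 and~IX.7]{FS2009}), exactly as in the supercritical case of Theorem~\ref{the:compoMain}, yields the Gaussian limit for $(X_n-\mu_n)/\sigma_n$, non-degenerate by the Proposition above.

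The one genuine subtlety is that $F(z,u)$ is not literally a pure scheme $G(uH(z))$: the factor $P(z)=1/A(z)$ shares the branch point $z=\tfrac14$ with $H$, so it really does participate in the dominant singular behaviour in the subcritical and critical regimes (it is merely a nonzero analytic factor at the simple pole when $q>3$). The work, accordingly, is the bookkeeping showing that Theorem~\ref{the:compoMain} survives this factor verbatim — which boils down to the identities $P(\rhoH)=\tauH/\rhoH$ and $c_P=\cH/\rhoH$ (both immediate from $H(z)=zP(z)$) together with the triviality that $P(\rhoH)$ is a finite nonzero constant. No idea beyond Theorem~\ref{the:compoMain} and routine singularity analysis is needed, and nothing is pattern-specific, since all three patterns share the same generating function.
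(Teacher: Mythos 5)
Your proof is correct and follows essentially the same route as the paper: both identify the sequence scheme with $G(y)=1/(1-y)$ and the same inner function $H(z)=\frac{2z}{1+2z+\sqrt{1-4z}}$ (your $z/A(z)$), compute $\tauH=\frac13$, hence $q_c=3$, and invoke Theorem~\ref{the:compoMain}. The only difference is cosmetic: the paper disposes of the prefactor $H(z)/z$ exactly, via $F(z,u)=\frac{1}{uz}\big(G(uH(z))-1\big)$, turning it into an index shift $(n,k)\mapsto(n+1,k+1)$, whereas you keep $P(z)=1/A(z)$ and verify by direct singular expansion (using $P(\rhoH)=\tauH/\rhoH$ and $c_P=\cH/\rhoH$) that it does not alter the three limit laws --- both computations check out.
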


\begin{proof}
In order to deduce the phase transitions from Theorem~\ref{the:compoMain},
we write $F(z,u)$ as a sequence of components $H(z)$ marked with $u$ 
\[
F(z,u)=\frac{H(z)}{z}\cdot \frac{1}{1-uH(z)}=
\frac{1}{uz}\cdot \frac{1}{1-uH(z)}-\frac{1}{uz},
\text{\quad where \quad }
H(z)=\frac{2z}{1+2z+\sqrt{1-4z}}.
\]
Here, $F(z,u)$ is not in the shape of the composition scheme~\eqref{Scheme:1}, but very close. 
For the limit law, these perturbing factors are irrelevant, since
by Lemma~\ref{lem:basicProb} one has for $n,k \geq 0$
\begin{equation*}
\P(\Xnq=k) 
= \frac{[z^n u^k]F(z,qu)}{[z^n]F(z,qu)} 
= \frac{[z^{n+1} u^{k+1}]G(uqH(z))}{[z^{n+1}]G(uqH(z))},
\text{\quad where \quad }
G(z)=\frac{1}{1-z}.
\end{equation*}
Thus, one has $\rhoG=1$, $\rhoH=\frac14$ and $\tauH=H(\rhoH)=\frac13$. 
Theorem~\ref{the:compoMain} then implies the phase transition at $q_c=\rhoG/\tauH=3$
with a Boltzmann distribution (simplifying here to a NegBin),
a Mittag-Leffler distribution (simplifying here to a Rayleigh distribution since $\lambdaH=\frac12$), and a Gaussian distribution. 
\end{proof}

\subsection{Returns to zero in Dyck and Motzkin paths}
We consider two classical directed models: Dyck and Motzkin paths.
Dyck paths consist of the steps up $(1,1)$ and down $(1,-1)$,
while Motzkin paths allow additionally some horizontal steps $(1,0)$.
They are called \emph{bridges} if they start at $(0,0)$ and end at $(2n,0)$,
and \emph{excursions} if it is additionally required that they never cross the $x$-axis.

The random variable $X_{n}$, counting the number of returns to the $x$-axis in a random excursion and bridge of size~$2n$, 
is a well-studied object~\cite{FS2009,bafl02}, leading for excursions to a negative binomial limit law for $X_n$
and leading for bridges to a Rayleigh limit law for $X_n/\sqrt{n}$. 
Note that the root degree in plane trees behaves the same due to a classical bijection between plane trees and Dyck excursions.
What happens when we give a Gibbs weight to the number of returns? The following theorem answers  this question.

\begin{theorem}[$q$-enumerations: limit laws for returns to zero]
\label{the:DyckMotzkin}
Under the Gibbs model,  the number $X_{n} = X_n(q)$ of returns to zero in Dyck excursions and bridges of length $2n$, 
in Motzkin excursions and in bridges of length $n$, has, for $n\rightarrow +\infty$, 
 a phase transition at $q_c$ and  follows, after rescaling, either a negative binomial, Rayleigh, or Gaussian distribution. 

\begin{minipage}{0.527\textwidth}
\smallskip
\centering
\begin{tabular}{cc}
\toprule
Parameter $q$ & Limit law \\
\midrule
$0<q<q_c$ & $X_n-1 \claw  \nb(2,1-q\tauH)$ \\
$q=q_c$ &  $\frac{-\cH}{\tauH}\frac{X_n}{\sqrt{n}} \claw \ray(\sqrt 2)$\\
$q>q_c$ & $\frac{X_n-\mu \cdot n}{\sigma \cdot \sqrt{n}} \claw  \Nc(0,1)$\\
\bottomrule
\end{tabular}
\end{minipage}
\begin{minipage}{0.43\textwidth}
\begin{align*}
q_c &= 
\begin{cases}
2 & \text{for Dyck excursions},\\
1 & \text{for Dyck bridges},\\
\frac{3}{2} & \text{for Motzkin excursions},\\
1 & \text{for Motzkin bridges}.
\end{cases}
\end{align*}
\end{minipage}
\medskip
\\
Here, $\tauH$ is $\frac12,1,\frac23,1$, and $\cH$ is $-\frac12,-1,-\frac1{\sqrt3},-\frac2{\sqrt{3}}$ for Dyck excursions, Dyck bridges, Motzkin excursions and Motzkin bridges, respectively.
\end{theorem}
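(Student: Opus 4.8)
The plan is to realise each of the four models as a sequence-like composition scheme $F(z,u)=G\big(uH(z)\big)$ with $G(z)=\frac1{1-z}$ (so that $\rhoG=1$ and $\lambdaG=-1$), the variable $u$ marking the number of returns to the $x$-axis, and then to read off the three regimes directly from Theorem~\ref{the:compoMain}. I would use the semilength as the size variable for the Dyck models (so a path of length $2n$ contributes $z^n$) and the length for the Motzkin models; this choice also settles the aperiodicity requirement, since in every case $[z^n]H(z)>0$ for all $n\ge 1$, and the relevant generating functions are algebraic, analytic in a $\Delta$-domain with a single dominant singularity on the positive axis.

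The first step is the first-return decomposition. For a \emph{Dyck excursion}, cutting at each return to $0$ writes it as a sequence of arches (an up step, an excursion, a down step), which with the Catalan generating function $E(z)$ gives $H(z)=zE(z)=\tfrac12\big(1-\sqrt{1-4z}\big)$, hence $F(z,u)=\tfrac1{1-uH(z)}$, $\rhoH=\tfrac14$, $\tauH=H(\tfrac14)=\tfrac12$. For a \emph{Dyck bridge}, a piece between consecutive zeros is a nonempty excursion lying either entirely above or entirely below, so $H(z)=2zE(z)=1-\sqrt{1-4z}$, $\rhoH=\tfrac14$, $\tauH=1$. For a \emph{Motzkin excursion}, a piece is either a horizontal step at level $0$ or an up--excursion--down arch, so with $M(z)=\frac{1-z-\sqrt{1-2z-3z^2}}{2z^2}$ one has $H(z)=z+z^2M(z)$, and a short computation confirms $M(z)=\frac1{1-H(z)}$, with $\rhoH=\tfrac13$, $\tauH=H(\tfrac13)=\tfrac23$. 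For a \emph{Motzkin bridge} one adds the negative arches: $H(z)=z+2z^2M(z)=1-\sqrt{1-2z-3z^2}$, $\rhoH=\tfrac13$, $\tauH=1$. In each case $H$ is algebraic with a square-root singularity, so $\lambdaH=\tfrac12\in(0,1)$, and expanding the radicand as $1-4z=1-z/\rhoH$ (Dyck) or $1-2z-3z^2=(1-z/\rhoH)(1+z)$ with $1+z$ evaluated at $z=\rhoH$ (Motzkin) yields the claimed constants $\cH\in\{-\tfrac12,\,-1,\,-\tfrac1{\sqrt3},\,-\tfrac2{\sqrt3}\}$.

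With this data the conclusion follows from Theorem~\ref{the:compoMain} applied with $\lambdaG=-1$ and $\lambdaH=\tfrac12$. The critical value is $q_c=\rhoG/\tauH$, which evaluates to $2$, $1$, $\tfrac32$, $1$ in the four cases. For $0<q<q_c$ the scheme is subcritical and $X_n-1$ converges to the Boltzmann law $\mathcal B_{G'}(q\tauH)$, which for $G(z)=\tfrac1{1-z}$ is exactly $\nb(2,1-q\tauH)$. For $q=q_c$ the scheme is critical and $\frac{-\cH X_n}{\tauH\sqrt n}$ converges to the two-parameter Mittag-Leffler law with $\alpha=\lambdaH=\tfrac12$ and $\beta=-\lambdaG\lambdaH=\tfrac12$, i.e.\ to $\ray(\sqrt 2)$. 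For $q>q_c$ the scheme is supercritical and $(X_n-\mu n)/(\sigma\sqrt n)\claw\Nc(0,1)$ with $\mu$ and $\sigma^2$ given by the formulas of Theorem~\ref{the:compoMain} in terms of $\rho=\rho(q)$ solving $qH(\rho)=\rhoG$, the positivity $\sigma^2>0$ being guaranteed by the proposition on the variance constant.

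I do not expect a serious obstacle here: the argument is essentially a reduction to Theorem~\ref{the:compoMain}. The one point requiring care is checking that $F(z,u)$ is a \emph{clean} instance of $G(uH(z))$ — that the number of returns to zero coincides exactly with the core size of the sequence — which the first-return decomposition provides for free, with no perturbing prefactor of the kind encountered for pattern-avoiding permutations. The remaining effort is the routine bookkeeping of $\rhoH$, $\tauH$ and $\cH$ across the four cases, together with confirming the identities $M(z)=\frac1{1-H(z)}$ (and the analogous bridge generating functions $\frac1{\sqrt{1-4z}}$ and $\frac1{\sqrt{1-2z-3z^2}}$) as a sanity check that the decompositions are correct.
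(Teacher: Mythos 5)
Your proposal is correct and follows essentially the same route as the paper: first-return decomposition into arches, recognition of the sequence scheme $F(z,u)=\frac{1}{1-uH(z)}$ with $G(z)=\frac{1}{1-z}$, and a direct application of Theorem~\ref{the:compoMain}. The only (cosmetic but in fact advantageous) difference is that you count Dyck paths by semilength, whereas the paper keeps $z$ per step and $H(z)=z^2D(z)$; your choice removes the periodicity of the support of $H$ and yields exactly the constants $\cH=-\tfrac12,-1$ stated in the theorem, and you additionally carry out the verification of the $\cH$ values, which the paper leaves implicit.
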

\pagebreak 

\begin{proof}
Cutting each time the path returns to the $x$-axis~\cite[p.~636]{FS2009}, one directly sees that the generating functions $D(z)$ and $B_D(z)$ of Dyck excursions and bridges, respectively, satisfy the relations
\[D(z) = \frac{1}{1-z^2D(z)} = \frac{1 - \sqrt{1-4z^2}}{2 z^2}
\qquad \text{ and } \qquad
B_D(z) = \frac{1}{1-2z^2D(z)} = \frac{1}{\sqrt{1-4z^2}}.\]
The generating functions $D(z,u)$ and $B_D(z,u)$ of Dyck excursions and bridges marking the number of returns are
\begin{align}\label{Du}
D(z,u) = \frac{1}{1-z^2uD(z)}
\qquad \text{ and } \qquad
B_D(z,u) = \frac{1}{1-2z^2uD(z)}.
\end{align}
In both cases we recognize a composition scheme~\eqref{Scheme:1} with $G(z)=\frac{1}{1-z}$ and $H(z) = z^2D(z)$ for excursions and $H(z)=2z^2 D(z)$ for bridges.
Therefore, we can readily apply Theorem~\ref{the:compoMain}. 

Motzkin excursions have the generating function
\[
M(z)
=\frac{1-z - \sqrt{ (1+z)(1-3z)}}{2z^2},
\]
with dominant singularity $\rho=\frac13$~\cite{FS2009}. 
With the same ideas as for Dyck paths, we directly get the bivariate generating functions $M(z,u)$ and $B_M(z,u)$ of excursions and bridges, respectively, as
\begin{equation}\label{Mu}
M(z,u)=\frac{1}{1- z u\big(1+z M(z)\big)}
\qquad \text{ and } \qquad
B_M(z,u) = \frac{1}{1- z u\big(1+2z M(z)\big)}.
\end{equation}
Again, we recognize the composition scheme~\eqref{Scheme:1} with $G(z)=\frac{1}{1-z}$ and $H(z) = z(1+zM(z))$ for excursions and $H(z)=z(1+2zM(z))$ for bridges.
This leads again to similar phase transitions.
\end{proof}

\begin{remark}[Weighted paths]
\newcommand{\ww}{p}
Let $\ww_{\sminus 1}, \ww_0, \ww_{1} \geq 0$ be the weights of the steps $(1,{\sminus}1)$, $(1,0)$, $(1,1)$, respectively. 
The weight of a path is the product of its weights.
Weighted Dyck excursions and bridges behave exactly as unweighted ones, as each path of length $2n$ has a weight $(\ww_{\sminus 1} \ww_{1})^n$.
However, weighted Motzkin excursions and bridges behave differently. 
With the same techniques it is easy to show that for weighted Motzkin excursions, the phase transition occurs at 
\begin{align*}
q_c 
= \frac{\ww_0 + 2 \sqrt{\ww_{\sminus 1} \ww_{1}}}{\ww_0 + \sqrt{\ww_{\sminus 1} \ww_{1}}}
= 1 + \frac{1}{1 + \frac{\ww_0}{\sqrt{\ww_{\sminus 1} \ww_{1}}}}
\in (1,2].\end{align*}	
Finally, for weighted Motzkin bridges the phase transition again always occurs at $q_c=1$, because $\tauH=1$ is independent of the weights. 
\end{remark}

\subsection{Boundary contacts for quarter-plane walks}
A direct byproduct of our results are phase transitions for Hadamard models of quarter-plane walks.
Beaton, Owczarek, and Rechnitzer~\cite{BOR2019} initiated the study of quarter-plane walks with wall interactions. 
We are interested in walks restricted to the quarter plane, starting and ending at the origin, and their interaction with the walls
(that is, their number of contacts with $x$- or $y$-axis). 
\pagebreak

It turns out that in many models the generating functions are rather complicated~\cite{BOR2019}, 
but for three families of walks called \emph{Hadamard models}, the analysis of contacts can be done in a fairly simple way. 
Such models are enumerated by a \emph{Hadamard product} of generating functions
\begin{equation*}
A(z)\odot B(z):=\sum_{n\ge 0}a_n b_n z^n,
\qquad 
\text{where } A(z)=\sum_{n\ge 0}a_n z^n
\text{\quad and  \quad} B(z)=\sum_{n\ge 0}b_nz^n.
\end{equation*}

They correspond to the diagonal, diabolo, and king walk models with stepsets 
\begin{center}\Diagonal, \Diabolo, \King,\end{center}
respectively (for the last two models, one has in fact a slight variant of the usual diabolo or king models: here we allow additionally the step $(0,0)$, as indicated by the centre dot in the stepset representation).
Walks of length $2n$ in the quarter plane that start and end at $(0,0)$ decompose into two independent directed excursions of length $n$.
Therefore, these models are in bijection with pairs of Dyck and Motzkin excursions.
This is summarized in the following table,
where $D(z,u)$ and $M(z,u)$ are the generating functions from~\eqref{Du} and \eqref{Mu}, respectively.

\begin{center}
\begin{tabular}{ccccc}
\toprule
Model & Steps & Generating function $Q(z,u_1,u_2)$ & Sequence $Q_{2n}$ & OEIS\\  
\midrule
Diagonal & \Diagonal & $D(z,u_1)\odot D(z,u_2)$ & $C_n\cdot C_n$ & \OEISs{A001246} \\
Diabolo & \Diabolo & $D(z,u_1)\odot M(z,u_2)$ & $C_n\cdot M_n$ &\OEISs{A151362} \\
King & \King & $M(z,u_1)\odot M(z,u_2)$ & $M_n\cdot M_n$ & \OEISs{A133053}\\
\bottomrule
\end{tabular}
\end{center}
\smallskip

{}

The limit laws for the number of wall interactions with the $x$-axis or $y$-axis depend on the particular values of the $q$-enumerations; 
compare with the results in Theorem~\ref{the:DyckMotzkin}.
More precisely, we get the following proposition. By symmetry it also translates to $y$-axis contacts in the missing cases.

\begin{theorem}[Boundary interactions for some quarter-plane walks]
The number of $x$-axis contacts of diagonal walks and the number of $y$-axis contacts of diabolo walks follows the law of the q-enumeration of Dyck returns to $0$.
The number of $x$-axis contacts of diagonal, diabolo, and king walks follows the law of the q-enumeration of Motzkin returns to $0$.
Accordingly, the phase transitions are the same as in Theorem~\ref{the:DyckMotzkin}.
\end{theorem}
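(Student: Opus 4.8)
The plan is to show that, for each of the three Hadamard models and each axis, the Gibbs distribution of the number of contacts with that axis coincides \emph{exactly} — not merely asymptotically — with the Gibbs distribution of the number of returns to zero in a single directed excursion, and then to quote Theorem~\ref{the:DyckMotzkin}. First I would recall the decomposition underlying the table: a quarter-plane walk of the relevant length splits into a pair of directed excursions, so that its contacts with a prescribed axis are in bijection with the returns to zero of one of the two excursions. This is exactly what the shape $Q(z,u_1,u_2)=A(z,u_1)\odot B(z,u_2)$ records, where the Hadamard product is taken in the size variable~$z$, the variable $u_1$ marks the contacts with the chosen axis, $A\in\{D,M\}$ is the generating function (from~\eqref{Du} or~\eqref{Mu}) of the directed-excursion model into which that axis projects, and $B\in\{D,M\}$ is the analogue for the other axis; thus $A=D$ for the $x$-axis of the diagonal model and for the $y$-axis of the diabolo model, and $A=M$ for the king model and for the $x$-axis of the diabolo model.

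The key step is a one-line cancellation. By Lemma~\ref{lem:basicProb}, the Gibbs distribution of the number $X_n=X_n(q)$ of contacts with the chosen axis satisfies $\P(X_n=k)=[z^nu_1^k]\,Q(z,qu_1,1)\big/[z^n]\,Q(z,q,1)$. Since a Hadamard product is coefficientwise in~$z$, specializing the untracked variable $u_2$ to $1$ factorizes every coefficient of $z^n$:
\[
\begin{gathered}
[z^nu_1^k]\,Q(z,qu_1,1)=\big([z^nu_1^k]A(z,qu_1)\big)\cdot\big([z^n]B(z,1)\big),\\
[z^n]\,Q(z,q,1)=\big([z^n]A(z,q)\big)\cdot\big([z^n]B(z,1)\big).
\end{gathered}
\]
The scalar $[z^n]B(z,1)$ — the total number of $B$-excursions of that size — is common to numerator and denominator and cancels, leaving $\P(X_n(q)=k)=[z^nu_1^k]A(z,qu_1)\big/[z^n]A(z,q)$, which, again by Lemma~\ref{lem:basicProb}, is precisely the Gibbs probability mass function of the number of returns to zero in an $A$-excursion. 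Hence, for every $n$, the number of contacts with the chosen axis has the same Gibbs distribution as the number of returns to zero in a Dyck excursion (if $A=D$) or a Motzkin excursion (if $A=M$) of the corresponding length.

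The conclusions of Theorem~\ref{the:DyckMotzkin} then transfer verbatim: the limit laws (negative binomial, Rayleigh, Gaussian), the critical thresholds $q_c$, and the constants $\tauH$ and $\cH$ are all inherited from the Dyck- or Motzkin-excursion case according to the model and the axis. The remaining cases — the $y$-axis of the diagonal and king models — follow from the invariance of those two step sets under the reflection $x\leftrightarrow y$, which identifies the two axis-contact statistics in distribution. The main obstacle, such as it is, is not analytic but is the bookkeeping of the first paragraph: one must verify that $Q(z,u_1,u_2)$ is genuinely of the Hadamard shape $A(z,u_1)\odot B(z,u_2)$ in the size variable, with $u_1$ tracking precisely the contacts with the chosen axis and $A$ the excursion generating function into which that axis projects — this is the content of the decomposition recalled above and of the table (following~\cite{BOR2019}). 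Once that is granted, the equality of distributions, and hence the phase transition, is immediate, with no further analytic work.
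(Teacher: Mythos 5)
Your proposal is correct and follows essentially the same route as the paper: both exploit that the Hadamard product acts coefficientwise in $z$, so the untracked factor $[z^n]B(z,1)$ cancels between numerator and denominator of the Gibbs probability from Lemma~\ref{lem:basicProb}, reducing each axis-contact statistic exactly to the corresponding Dyck or Motzkin returns-to-zero statistic of Theorem~\ref{the:DyckMotzkin}. Your added remark on the $x\leftrightarrow y$ symmetry for the remaining axes matches the paper's own observation preceding the theorem.
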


\begin{proof}[Proof (sketch)]
In the diagonal model, the generating function of the $x$-axis contacts is equal to $D(z,qu) \odot D(z)$.
Therefore, by Lemma~\ref{lem:basicProb} we have
\begin{equation*}
\P(\Xnq=k)= \frac{[z^n u^k]D(z,qu) \odot D(z)}{[z^n]D(z,q) \odot D(z)} = \frac{[z^n u^k]D(z,qu)}{[z^n]D(z,q)}.
\end{equation*}
Thus, the result follows directly from Theorem~\ref{the:DyckMotzkin}.
The other models follow in the same fashion.
\end{proof}

\begin{remark}[$x$-axis plus $y$-axis contacts]
The law of the number of $x$-axis plus $y$-axis contacts is more involved,
as it requires the study of the analytic behaviour of Hadamard products of the shape  $(1-z)^a \odot (1-z)^b$.
These products were studied by Fill, Flajolet, and Kapur, who gave the corresponding Puiseux expansions; see~\cite[Proposition 8]{FillFlajoletKapur2005}.
Note that, as pinpointed in~\cite{FillFlajoletKapur2005}, the case $a+b$  integer requires the use of additional hypergeometric identities. 
\end{remark}

\subsection{Friendly two-watermelons without wall: contacts and returns}
We consider a pair of directed walkers with Dyck steps $(1,-1)$ and $(1,1)$.
The walkers start and end at the same point, may meet and share edges but not cross.
Such walker configurations $\mathcal{W}$ are also called friendly two-watermelons~(see the related work of Roitner~\cite{Roitner2020b}
and Krattenthaler, Guttmann, and Viennot~\cite{KrattenthalerGuttmannViennot2000}).
We are interested in walks of length $n$ and the number of contacts $C$ of the two walkers.
A contact in a two-watermelon is a point (not counting the starting point) where both paths occupy the same lattice point. (See Figure~\ref{fig:twowatermeloncontacts}.)

\begin{theorem}[Phase transition for contacts in friendly two-watermelons]\label{theo:2watermelons}
With the renormalizations of Theorem~\ref{the:compoMain},
the limit Gibbs distribution of the number of contacts in friendly two-watermelons has a phase transition
with critical value $q_c=4/3$:  
\begin{center}
\begin{tabular}{l@{\hskip 10mm }ccc}
\toprule
Parameter $q$ &$0<q<\frac{4}3$ & $q=\frac{4}3$ &  $q>\frac{4}3$\\
\midrule
Limit law of $X_n(q)$ & Negative binomial & Rayleigh  &  Gaussian\\
& $\nb(2,1-\frac{3}{4}q)$ & ${\mathcal R}(\sqrt{2})$ &  $\mathcal{N}(0,1)$\\
\bottomrule
\end{tabular}\end{center}
\end{theorem}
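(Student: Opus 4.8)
The plan is to recognise the bivariate generating function of friendly two-watermelons, with $z$ marking the length and $u$ the number of contacts, as a sequence-type composition scheme $F(z,u)=G\big(uH(z)\big)$ with $G(z)=\frac{1}{1-z}$, and then to invoke Theorem~\ref{the:compoMain} with the parameters read off from $H$.

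First I would carry out the combinatorial decomposition. Cutting a friendly two-watermelon at each of its contacts turns it into a \emph{sequence} of primitive pieces, a primitive piece being the part of the configuration between two consecutive contacts together with the contact at its right end; hence the number of contacts equals the number of pieces. Such a piece is of exactly one of two kinds. Either the two walkers make the \emph{same} step (a tandem step, up or down), which contributes $2z$; or they split at the first step, stay strictly separated, and reconverge at the last step. In the second case the first and last steps are forced, and the part strictly between them is an arbitrary Motzkin excursion — every flat step of which must be given a weight $2$, because the lower walker has a free up/down choice on each step where the two walkers move in parallel. Writing $M_2(z)=\frac{1-2z-\sqrt{1-4z}}{2z^2}$ for the generating function of these weight-$2$ Motzkin paths (so $M_2=1+2zM_2+z^2M_2^2$), a piece of the second kind contributes $z^2M_2(z)$, and therefore
\[
F(z,u)=\frac{1}{1-u\,H(z)},\qquad
H(z)=2z+z^2M_2(z)=\frac{1+2z-\sqrt{1-4z}}{2},
\]
which is $G\big(uH(z)\big)$ with $G(z)=\frac{1}{1-z}$. (Up to notation this is the generating function underlying Roitner's analysis~\cite{Roitner2020b}; equivalently, the number of contacts in friendly two-watermelons is distributed exactly as the number of returns to zero in weighted Motzkin excursions with step weights $(w_{\sminus 1},w_0,w_1)=(1,2,1)$, as in the Remark following Theorem~\ref{the:DyckMotzkin}.)

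Then I would read off the parameters and conclude. One has $\rhoG=1$, $\lambdaG=-1$, while the square-root singularity of $M_2$ at $z=\tfrac14$ gives $\rhoH=\tfrac14$ and $\lambdaH=\tfrac12$; and, since $M_2=\frac{1}{1-H}$ with $M_2(\tfrac14)=4$,
\[
\tauH=H\big(\tfrac14\big)=1-\frac{1}{M_2(\tfrac14)}=\tfrac34 ,
\qquad\text{so that}\qquad
q_c=\frac{\rhoG}{\tauH}=\tfrac43 .
\]
Theorem~\ref{the:compoMain} then applies directly. For $0<q<\tfrac43$ the scheme is subcritical with $G=(1-z)^{-1}$, i.e.\ $m=1$, so $X_n-1$ converges to $\nb\big(2,1-q\tauH\big)=\nb\big(2,1-\tfrac34 q\big)$. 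At $q=q_c=\tfrac43$ the exponent product is $\lambdaG\lambdaH=-\tfrac12$, so the rescaled $X_n$ converges to the two-parameter Mittag-Leffler law with $\alpha=\beta=\tfrac12$, which is the Rayleigh distribution $\mathcal{R}(\sqrt2)$. For $q>\tfrac43$ the scheme is supercritical, giving the Gaussian limit law $\mathcal{N}(0,1)$ with mean and variance of order $n$.

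The main obstacle is the combinatorial step: making the description of $H(z)$ rigorous, in particular the appearance of the weight $2$ on the tandem and flat steps, and the boundary cases of the decomposition (a lone tandem step does create a contact; the empty watermelon; the common endpoint of a primitive piece may lie at a different height than its starting point). Once $F(z,u)=G\big(uH(z)\big)$ is established, the rest is routine: as in the pattern-avoiding permutation example, $F(z,u)$ may differ from the pure scheme~\eqref{Scheme:1} by an irrelevant prefactor, but by Lemma~\ref{lem:basicProb} this does not affect the limiting Gibbs distribution, and all the analytic content is already supplied by Theorem~\ref{the:compoMain} together with Lemma~\ref{Lemma:asympt}.
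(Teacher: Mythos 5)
Your proposal is correct and follows essentially the same route as the paper: recognise $F(z,u)=G\big(uH(z)\big)$ with $G(z)=\frac{1}{1-z}$, compute $\tauH=H(\tfrac14)=\tfrac34$, and invoke Theorem~\ref{the:compoMain} to get $q_c=\tfrac43$ with the NegBin/Rayleigh/Gaussian phases. The only difference is that you rederive Roitner's generating function~\eqref{eqn:VR1} via the arch decomposition into tandem steps and separated pieces (reducing to $(1,2,1)$-weighted Motzkin excursions), whereas the paper simply cites~\cite{Roitner2020b} for it; your derivation is sound and consistent with the weighted-Motzkin remark, which independently confirms $q_c=1+\frac{1}{1+2/\sqrt{1}}=\tfrac43$.
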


\begin{proof}
Let $F(z,u)$ denote the bivariate generating function of friendly two-watermelons with respect to the number of contacts:
\[
F(z,u)=\sum_{w\in \mathcal{W}}z^{|w|}u^{C(w)}.
\]
This generating function was determined by Roitner~\cite{Roitner2020b}, using a reduction to weighted Motzkin paths. It is given by
\begin{equation}
\label{eqn:VR1} 
F(z,u)=\frac{1}{1-u\big(z^2 W(z)+2z\big)},\qquad W(z)=\frac{1-2z-\sqrt{1-4z}}{2z^2}.
\end{equation}
Under the uniform distribution model, Roitner also obtained a discrete limit law for the parameter (number of contacts). 
We note in passing that closely related problems in families of osculating walkers have been considered before by Bousquet-Mélou~\cite{BousquetMelou06}.
Under the Gibbs distribution model, we recognize a composition scheme 
with $G(z)=1/(1-z)$, $\rhoG=1$, and $H(z)= z(zW(z)+2)$ so that $\tauH=\frac34$.
Therefore, Theorem~\ref{the:compoMain} applies: we get the 3 phases, with the critical value $q_c=\frac{\rhoG}{\tauH }=\frac43$.
\end{proof}

\begin{figure}[htb!]
\centering
\begin{tikzpicture}[scale=0.45]
    
\newcommand{\pathLength}{24}
		
\newcommand{\maxup}{4}
		
\newcommand{\maxdown}{-3}
        \draw[step=1cm,lightgray,very thin] (0,\maxdown) grid (\pathLength,\maxup);
        \draw[thick,->] (0,0) -- (\pathLength+0.5,0) node[anchor=north west] {};
        \draw[thick,->] (0,\maxdown) -- (0,\maxup+0.5) node[anchor=south east] {};

\newcommand{\up}{ -- ++(1,1) }
		
\newcommand{\down}{ -- ++(1,-1) }
		
    \draw[line width=2pt,black] (0,0)
    \down
    \down
    \up
    \up
    \down
    \down
    \up
    \up
    \up
    \down
    \up
    \up
    \down
    \down
    \up
    \down
    \up
    \down
    \down
    \down
    \up
    \up
    \up
    \up
	;
	
    \draw[line width=2pt,blue] (0,0)
    \up
    \up
    \down
    \down
    \up
    \down
    \up
    \down
    \up
    \up
    \up
    \down
    \up
    \down
    \down
    \up
    \down
    \down
    \up
    \down
    \up
    \up
    \up
    \down
	;
    
    \filldraw[red] (4,0) circle (8pt);
    \filldraw[red] (8,0) circle (8pt);
    \filldraw[red] (9,1) circle (8pt);
    \filldraw[red] (12,2) circle (8pt);
    \filldraw[red] (15,1) circle (8pt);
    \filldraw[red] (17,1) circle (8pt);
    \filldraw[red] (18,0) circle (8pt);
    \filldraw[red] (\pathLength,2) circle (8pt);
    
\end{tikzpicture}
\caption{An example of a friendly two-watermelon without wall of length $24$ with $8$ contacts marked by red dots.
Under the Gibbs model where such an object is given the weight $q^8$, the distribution of the number of contacts 
then depends on the value of~$q$, according to the phase transition given in Theorem~\ref{theo:2watermelons}.
} 
\label{fig:twowatermeloncontacts}
\end{figure}
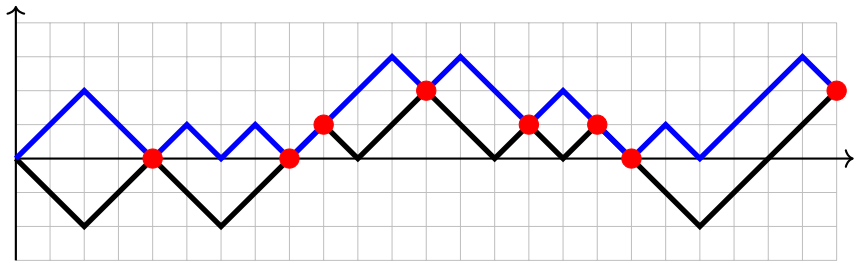 

\section{Extensions to other constructions: new phase transitions from negative binomial to chi to Gaussian}\label{sec:Extensions}
It is appealing to extend Theorem~\ref{the:compoMain} to even more general composition schemes such as
\begin{equation}\label{extendedscheme}
F(z,u)=M(z)\cdot G\big(u H(z)\big),
\end{equation}
where an additional factor $M(z)$ appears. This extended scheme 
is of interest as it captures many classical combinatorial structures: 
some families of trees or lattice paths (meanders), Pólya urns, 
and other probabilistic processes like the Chinese restaurant model.
In  its critical phase, this scheme was recently analysed in~\cite{BKW2021a} 
under the uniform distribution model for the associated combinatorial structures.
This extends the work of pioneers like Bender, Flajolet, Soria, Drmota, Hwang, Gourdon~\cite{Bender1973,FlajoletSoria93,DrmotaSoria95,DrmotaSoria97,Hwang1998,Gourdon98},
later synthesized in~\cite{FS2009}.

In the long version of this article, we analyse this extended scheme under the Gibbs measure model,
and we show that the phase transition for Gibbs distributions leads, in some cases,
to the 3-parameter Mittag-Leffler distribution introduced in~\cite{BKW2021a}. 
Further examples of combinatorial problems involving an extended scheme 
are the root degree in two-connected outerplanar graphs 
(see Drmota, Giménez, and Noy~\cite[Theorem 3.2]{DrmotaGimenezNoy2011}),
the returns to zero in coloured walks, and the number of wall interactions in watermelons.  
We now analyse the phase transitions for these last two models.

\subsection{Number of wall contacts in watermelons}
An $m$-watermelon of length $2n$ consists of $m$ walkers
moving from $(0,2i-2)$ to $(2n,2i-2)$, $1\le i \le m$, 
where every walker may either take an up step $(1,1)$ or a down step $(1,-1)$, 
but walkers are not allowed to occupy the same position (they are thus called \emph{vicious}).
One says that the watermelon has \emph{a wall} 
if the $x$-axis serves as a barrier which the lowest walker may touch but not cross.  (See Figure~\ref{fig:viciouswalkers}.)
Watermelons were introduced by Fisher~\cite{Fisher1984} for modelling wetting and melting.
We refer the reader to the work of Krattenthaler, Guttmann, and Viennot~\cite{KrattenthalerGuttmannViennot2000,Krattenthaler2006}
or Feierl~\cite{Feierl2009,Feierl2012,Feierl2014} for more results on watermelons and related problems.
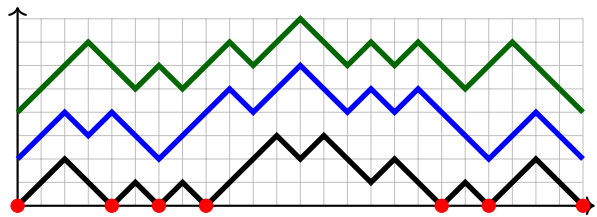
\begin{figure}[hbt!]
\centering
\begin{tikzpicture}[scale=0.31]
    
\newcommand{\pathLength}{24}
		
\newcommand{\maxup}{8}
		
\newcommand{\maxdown}{0}
        \draw[step=1cm,lightgray,very thin] (0,\maxdown) grid (\pathLength,\maxup);
        \draw[thick,->] (0,0) -- (\pathLength+0.5,0) node[anchor=north west] {};
        \draw[thick,->] (0,\maxdown) -- (0,\maxup+0.5) node[anchor=south east] {};

\newcommand{\up}{ -- ++(1,1) }
		
\newcommand{\down}{ -- ++(1,-1) }
		
    \draw[line width=2pt,black] (0,0)
    \up
    \up
    \down
    \down
    \up
    \down
    \up
    \down
    \up
    \up
    \up
    \down
    \up
    \down
    \down
    \up
    \down
    \down
    \up
    \down
    \up
    \up
    \down
    \down
	;

    \draw[line width=2pt,blue] (0,2)
    \up
    \up
    \down
    \up
    \down
    \down
    \up
    \up
    \up
    \down
    \up
    \up
    \down
    \down
    \up
    \down
    \up
    \down
    \down
    \down
    \up
    \up
    \down
    \down
	;
	
    \draw[line width=2pt,green] (0,4)
    \up
    \up
    \up
    \down
    \down
    \up
    \down
    \up
    \up
    \down
    \up
    \up
    \down
    \down
    \up
    \down
    \up
    \down
    \down
    \up
    \up
    \down
    \down
    \down
	;
    
    \filldraw[red] (0,0) circle (8pt);
    \filldraw[red] (4,0) circle (8pt);
    \filldraw[red] (6,0) circle (8pt);
    \filldraw[red] (8,0) circle (8pt);
    \filldraw[red] (18,0) circle (8pt);
    \filldraw[red] (20,0) circle (8pt);
    \filldraw[red] (24,0) circle (8pt);
\end{tikzpicture}
\caption{A 3-watermelon (with a wall) of length $24$ with $7$ $x$-axis 
contacts marked by red dots. 
Under the Gibbs model where such an object is given the weight $q^7$, 
the distribution of the number of contacts 
then depends on the value of~$q$, according to the phase transition given in Theorem~\ref{theo:mwatermelons}.
} 
\label{fig:viciouswalkers}
\end{figure} 

We are interested in the number of $x$-axis contacts of the lowest walker. 
For $m=1$ this reduces to Theorem~\ref{the:DyckMotzkin} on Dyck excursions.  For $m\geq 1$, we get the following theorem.

\begin{theorem}[Phase transition for wall contacts]\label{theo:mwatermelons}
The limit Gibbs distribution of the number of $x$-axis contacts in $m$-watermelons with a wall 
has a phase transition at 
$q_c=2$:  
\begin{center}
\begin{tabular}{l@{\hskip 10mm }ccc}
\toprule
Parameter $q$ &$0<q<2$ & $q=2$ &  $q>2$\\
\midrule
Limit law of $X_n(q)$ & Negative binomial & Chi distribution  & Gaussian\\
& $\nb(2m,1-\frac{q}{2})$ & $  \frac{X_n}{\sqrt{n}} \claw \chi(2m)$ & $\mathcal{N}(0,1)$ \\
\bottomrule
\end{tabular}\end{center}
\end{theorem}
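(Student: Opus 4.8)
The plan is to place the number of wall contacts of the lowest walker inside the (extended) composition-scheme framework of Theorem~\ref{the:compoMain}, just as was done for returns to zero in Theorem~\ref{the:DyckMotzkin}; for $m=1$ the object is a Dyck excursion and the statement is the $q_c=2$ line of Theorem~\ref{the:DyckMotzkin}, so we may take $m\ge 2$.

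First I would produce the bivariate generating function $F(z,u)=\sum_w z^{|w|}u^{C(w)}$, summed over $m$-watermelons with a wall, where $C(w)$ counts the contacts of the lowest walker with the $x$-axis. One decomposes such a watermelon at the returns of its lowest walker to the wall; the configuration of the remaining $m-1$ walkers above a given lowest arch is enumerated by the Lindstr\"om--Gessel--Viennot lemma together with the reflection principle, which, reflecting across the wall, turns the $m$ walkers bounded below by the wall into $2m-1$ mutually non-crossing unbounded walkers (the resulting explicit generating function is, e.g., that of Krattenthaler~\cite{Krattenthaler2006}). Up to elementary factors irrelevant for the limit law (exactly as the factors $\tfrac1{uz}$ in the proof of the pattern-avoiding-permutations theorem, or via Lemma~\ref{lem:basicProb}), $F(z,u)$ then has the extended composition-scheme shape~\eqref{extendedscheme}, $F(z,u)=M(z)\,G\bigl(uH(z)\bigr)$, with the following singular data: $H$ has a square-root singularity, so $\lambdaH=\tfrac12$, with $\tauH=H(\rhoH)=\tfrac12$; the outer function is a negative power of $1-w$ with $\rhoG=1$ and singular exponent $\lambdaG=1-2m$ (this is where the $2m-1$ reflected paths enter); and $M(z)$ is an algebraic factor whose singularity at $\rhoH$ does not affect the shape of the limit law. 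In particular $q_c=\rhoG/\tauH=2$, matching Krattenthaler's phase-transition value.

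Feeding this into Theorem~\ref{the:compoMain} --- in the form valid for the extended scheme~\eqref{extendedscheme}, worked out in the long version --- yields the three regimes. For $0<q<2$ the limit is the Boltzmann law with probability generating function $\bigl(\tfrac{1-q\tauH}{1-q\tauH v}\bigr)^{2m}$, i.e.\ the negative binomial $\nb(2m,1-q/2)$. For $q>2$ the singularity $\rhoq<\rhoH$ becomes an analytic function of the marking variable and Hwang's quasi-power theorem, used exactly as in Theorem~\ref{the:compoMain}, gives a Gaussian law with mean and variance of order $n$. For $q=q_c=2$ one lands in the critical case of~\eqref{extendedscheme}: after rescaling by $n^{\lambdaH}=\sqrt n$ the limit is a two-parameter Mittag-Leffler law $\operatorname{ML}(\alpha,\beta)$ with $\alpha=\lambdaH=\tfrac12$ and $\beta=-\lambdaG\lambdaH=\tfrac{2m-1}{2}$ (the extra parameter of the general critical extended scheme degenerates here).

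It remains to recognise $\operatorname{ML}\bigl(\tfrac12,\tfrac{2m-1}{2}\bigr)$ as a chi law. From the moment formula $\E(X^r)=\frac{\Gamma(\beta)\Gamma(r+\beta/\alpha)}{\Gamma(\beta/\alpha)\Gamma(\alpha r+\beta)}$ with $\alpha=\tfrac12$, $\beta/\alpha=2m-1$, and Legendre's duplication formula $\Gamma(2x)=\frac{2^{2x-1}}{\sqrt\pi}\Gamma(x)\Gamma(x+\tfrac12)$ applied to $\Gamma(r+2m-1)$ and to $\Gamma(2m-1)$, a short computation gives $\E(X^r)=\frac{2^{r}\Gamma(m+r/2)}{\Gamma(m)}$; since the $r$-th moment of $\chi(2m)$ is $\frac{2^{r/2}\Gamma(m+r/2)}{\Gamma(m)}$, this means $X\overset{d}{=}\sqrt2\,\chi(2m)$, equivalently $X$ has density $\frac{x^{2m-1}e^{-x^2/4}}{2^{2m-1}\Gamma(m)}$ (which is exactly the series of Theorem~\ref{the:compoMain} at $\alpha=\tfrac12$: the even-index terms vanish because $1/\Gamma(-n/2)=0$, leaving the odd powers $x^{2m-1},x^{2m+1},\dots$). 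Since the normalizing factor $-\cH/\tauH$ of Theorem~\ref{the:compoMain} equals $\sqrt2$ for this inner function (as in the case $m=1$), the two factors $\sqrt2$ cancel and we obtain $X_n/\sqrt n\claw\chi(2m)$. The hard part is the first step: obtaining $F(z,u)$ for all $m$ and verifying that, near its dominant singularity in each regime, it fits the extended template with $\lambdaG=1-2m$, $\lambdaH=\tfrac12$, $\tauH=\tfrac12$; once this is in place the three limit laws follow mechanically, the only remaining nontrivial point being the gamma-function identity turning the critical Mittag-Leffler law into the chi distribution.
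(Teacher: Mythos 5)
Your overall strategy (fit the model into the extended scheme~\eqref{extendedscheme}, read off the phase transition at $q_c=\rhoG/\tauH=2$, and convert the critical Mittag--Leffler law into $\chi(2m)$ via Legendre duplication) is the same as the paper's, and your gamma-function computation at the end is correct and in fact more explicit than the paper's ``can be seen from its moments''. But there is a genuine gap at the step you yourself flag as ``the hard part''. The paper does \emph{not} derive $F(z,u)$ by decomposing the watermelon at the returns of the lowest walker: it starts from Krattenthaler's closed-form $q$-enumeration $f_n(q)$, exchanges the order of summation, and uses the identity $\sum_{n\ge 0}\binom{2n+\ell}{n}z^n=C(z)^{\ell}/\sqrt{1-4z}$ to obtain the product form $F(z,q)=\frac{q^2z}{\sqrt{1-4z}}\cdot\frac{1}{(1-qzC(z))^{2m}}$. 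Your proposed combinatorial decomposition is problematic: cutting at the wall contacts of the lowest walker does not make the $m$-tuple of paths split into an independent sequence of blocks (the upper walkers need not return to their starting heights at those abscissae), and the paper explicitly leaves the existence of such a bijective/sequence decomposition as an open suggestion rather than a proof. So the first step of your argument cannot be completed as described; you would in any case have to fall back on Krattenthaler's formula, at which point the generating-function manipulation above is the missing content.

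There is also an error in the singular data you assign, which happens not to propagate to the final answer. The actual scheme has outer exponent $\lambdaG=-2m$ together with a prefactor $M(z)=q^2z(1-4z)^{-1/2}$ whose exponent $\lambdaM=-\tfrac12$ at $\rhoH=\tfrac14$ \emph{does} shift the critical singular exponent (this is exactly why the extended scheme and its three-parameter Mittag--Leffler law are invoked); your claim that $M$ ``does not affect the shape of the limit law'' is false, and your $\lambdaG=1-2m$ gives $F(z,q_c)$ a singular exponent $\tfrac12-m$ instead of the true $-\tfrac12-m$ (already for $m=1$ one checks $F(z,2)=4z(1-4z)^{-3/2}$). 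You still land on $\chi(2m)$ because at $\alpha=\tfrac12$ the moment sequence of your $\operatorname{ML}(\tfrac12,m-\tfrac12)$ coincides, by the duplication formula, with that of the correct (three-parameter) critical law with data $(\lambdaG,\lambdaH,\lambdaM)=(-2m,\tfrac12,-\tfrac12)$ --- both reduce to $\E(X^r)=2^{r}\Gamma(m+r/2)/\Gamma(m)$ --- but this is a coincidence of the reparametrization, not a validation of the intermediate step. The same internal inconsistency appears in the subcritical regime: with your claimed structure the Boltzmann law would be a shifted $\nb(2m,1-q/2)$ coming from $G'$, whereas the unshifted $\nb(2m,1-q/2)$ you (correctly) state arises because the dominant subcritical singularity is that of $M$, the factor you discarded.
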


\begin{proof}[Proof (sketch).]
The $q$-enumeration of the number of contacts in $m$-watermelons of length $2n$ ($n>0$)  with a wall was
given by Krattenthaler~\cite[Theorem 4]{Krattenthaler2006}:
\begin{equation}\label{Kratproduct}
f_{n}(q)=\frac{(n-1)!\prod_{i=0}^{m-1}(2i+1)!\prod_{i=0}^{m-2}(2n+2i)!}{\prod_{i=0}^{2m-2}(n+i)!}
\sum_{\ell=2}^{n+1}   \binom{2n-\ell}{n-1}\binom{\ell+2m-3}{\ell-2} q^{\ell}.
\end{equation}
In the following, we denote by $X_n$ the random variable counting the number of wall contacts, where we drop the dependence on $m$. Its probability generating function $\E(q^{X_n})$ satisfies
\begin{equation}
\label{eqn:Kratt1}
\E(q^{X_{n}})=\frac{f_n(q)}{f_n(1)}.
\end{equation}
Let $F(z,q)$ denote the generating function of the numerator of the reduced fraction~\eqref{eqn:Kratt1}, i.e.,
\vspace{-1em}
\begin{equation}
\label{eqn:Krattenthaler2}
F(z,q)=\sum_{n> 0}z^n \sum_{\ell=2}^{n+1}\binom{2n-\ell}{n-1}\binom{\ell+2m-3}{\ell-2}q^{\ell}.
\end{equation}
We change the order of summation and shift the index to get
\begin{equation*}
F(z,q)\mkern-1mu=\mkern-1mu\sum_{\ell \ge 2}\mkern-1.5mu\binom{\ell+2m-3}{\ell-2}\mkern-.5mu q^{\ell} \mkern-4.1mu \sum_{n\ge  \ell-1}\mkern-1.5mu\binom{2n-\ell}{n-1}\mkern-.5mu  z^{n}
\mkern-1mu=\mkern-1mu q^2z\mkern-.5mu\sum_{\ell \ge 0}\mkern-1.6mu\binom{\ell+2m-1}{\ell}\mkern-.5mu  q^{\ell}z^{\ell}\mkern-.5mu\sum_{n\ge  0}\mkern-1.6mu\binom{2n+\ell}{n}\mkern-.5mu  z^{n}\mkern-1.6mu.
\end{equation*}
Introducing the Catalan generating function $C(z)=\frac{1-\sqrt{1-4z}}{2z}$, one has
\[
\sum_{n\ge  0}\binom{2n+\ell}{n} z^{n}   =  \frac{C(z)^\ell}{\sqrt{1-4z}}.
\]
While such a formula can be proven by convolution identities~\cite[Eq.~(5.72)]{GKP1994},
it is pleasant to give a bijective proof; we invite the reader to pause here and find it before reading on. 

The bijection consists in taking a walk of length $2n+\ell$ ending at altitude $\ell$, 
cutting it at the initial longest bridge, and after this, at the last passage at each altitude.  
This gives $\sum_{n\ge  0}\binom{2n+\ell}{n} z^{2n+\ell}   =  \frac{1}{\sqrt{1-4z}} (z C(z^2))^\ell$.

Going back to the quest for simplifying $F(z,q)$, we thus obtain
\begin{align}
F(z,q)&= \frac{q^2 z}{\sqrt{1-4z}} \cdot \frac{1}{\big(1-qzC(z)\big)^{2m}}, \label{eqFzq}
\end{align}
where in the last step we used the generating function identity $G(z)=\frac{1}{(1-z)^{2m}}=\sum_{\ell\ge 0}\binom{\ell+2m-1}{\ell}z^\ell$.
Note that Eq.~\eqref{eqFzq} suggests that there may be a bijective proof of Formula~\eqref{Kratproduct} 
using links with bridges and arches 
(instead of Krattenthaler's tour de force relying  on determinants and jeu de taquin). 

Now, in order to get the limit laws, our key observation is that Eq.~\eqref{eqFzq} is a composition scheme of shape  
$F(z,q)=M(z) G\big(q H(z)\big)$, where $H(z)=zC(z)$,
and the probability generating function (under the Gibbs measure) of the number of contacts is given by
\[
\E(v^{X_n(q)})= \frac{[z^{n-1} v^k]F(z,qv)}{[z^{n-1}]F(z,q)}.
\]
Next, we just apply singularity analysis with values $\lambdaG=-2m$, $\rhoG=1$, $\lambdaH=\frac12$. The scheme is critical for $q_c=2$,
where one gets the Mittag-Leffler distribution $\frac{1}{\sqrt{2}}\operatorname{ML}(\frac12,2m-\frac{1}{2})$,  which can be seen (from its moments) to be the same as the chi distribution $\chi(2m)$.  
This gives the theorem.
\end{proof}

\subsection{Returns to zero in coloured walks}
Let $m>0$  be an integer.
An $m$-coloured bridge is an $m$-tuple $(B_1,\dots,B_m)$ of (possibly empty) bridges $B_i$.
As a visual representation, we think of them appended one after the other, $B_i$ is coloured in colour $i$.
Note that not all colours need to be present. 
Let an $m$-coloured walk be an $m$-coloured bridge to which a final walk is appended that never returns to the $x$-axis.    
See Andrews~\cite{Andrews2007} and~\cite{GhoshDastidarWallner2024,HopkinsOuvry2021} for some combinatorial properties 
of these walks, and links with multicompositions.

We now prove that their number of returns to zero follows a $\chi(m)$ distribution. In particular, this gives the half-normal distribution for $m=1$ (which extends Theorem~\ref{the:DyckMotzkin} to unconstrained walks, see~\cite{Wallner20halfnormal}), 
the Rayleigh distribution for $m=2$, and the Maxwell distribution for $m=3$.  

\begin{figure}[hbt!]
\centering
\includegraphics[width=0.99\textwidth]{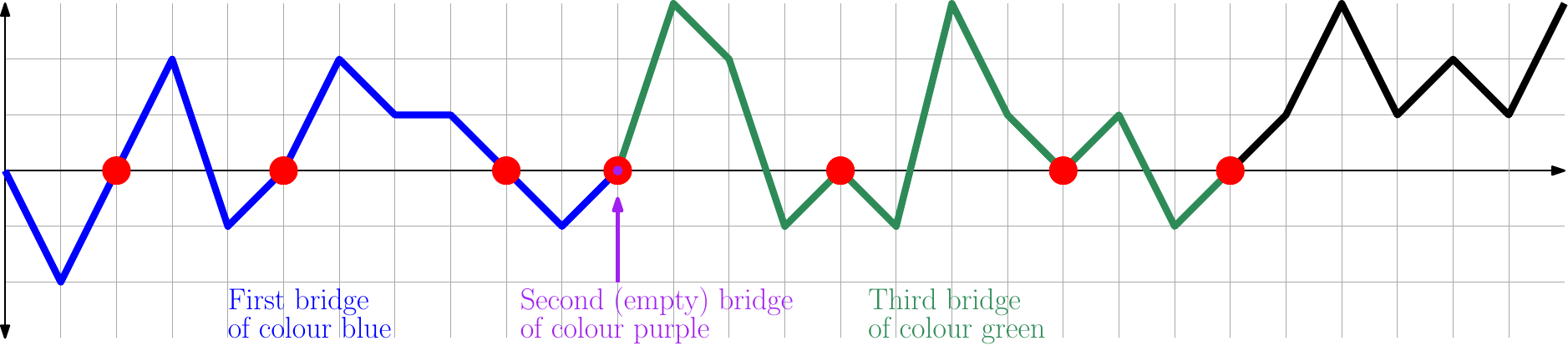}
\caption{A $3$-coloured walk (only $2$ of the available $3$ colours are present) with $7$ returns to zero (red dots).} 
\label{fig:returnsincolouredwalk}
\end{figure}
\vspace{-4mm}

\begin{theorem}[Phase transitions for returns to zero]
\label{the:DyckMotzkinWalks}
Under the Gibbs model, the distribution of the number $X_n(q)$ of returns to zero in $m$-coloured walks of length $n$ has, after rescaling for $n\rightarrow +\infty$, a phase transition at~$q_c=1$~and follows, either a negative binomial, chi, or~Gaussian~distribution. 
\bigskip

\centering\begin{tabular}{l@{\hskip 10mm }ccc}
\toprule
Parameter $q$ &$0<q<1$ & $q=1$ &  $q>1$\\
\midrule
Limit law of $X_n(q)$ & Negative binomial & Chi distribution & Gaussian \\
& $\nb(m,1-q)$ & $\chi(m)$ & $\mathcal{N}(0,1)$\\
\bottomrule
\end{tabular}
\end{theorem}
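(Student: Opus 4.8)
The plan is to identify the generating function for $m$-coloured walks marking the number of returns to zero, and to recognize it as an extended composition scheme of the shape~\eqref{extendedscheme} so that the by-now-familiar machinery applies. First I would set up the building blocks: the generating function of a single bridge (with Dyck steps) is $B_D(z)=1/\sqrt{1-4z^2}$, and the generating function of an arch — a path from $(0,0)$ returning to the $x$-axis for the first time at its end — is $A(z)=1-1/B_D(z)=1-\sqrt{1-4z^2}$, so that a bridge is a sequence of arches, $B_D(z)=1/(1-A(z))$. A final walk that never returns to zero after its first step has generating function $W(z)$ with, say, $W(z)=\frac{1}{z}\bigl(B_D(z)-1\bigr)\cdot(\text{something})$ — I would extract the exact elementary expression, but its only relevant feature is that it is analytic at the radius of convergence $\rho_H$ of the arch generating function and contributes the factor $M(z)$. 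An $m$-coloured bridge is an $m$-tuple of bridges, each a sequence of arches, and each arch is a return to zero; hence marking returns by $v$ gives the bivariate generating function
\[
F(z,v)=\Bigl(\frac{1}{1-vA(z)}\Bigr)^{m}\cdot W(z)=M(z)\cdot G\bigl(vH(z)\bigr),
\qquad G(y)=\frac1{(1-y)^m},\quad H(z)=A(z),
\]
up to the precise identification of $M(z)=W(z)$ and $H(z)$; I would double-check the combinatorial decomposition and the placement of the extra final-walk factor against the references~\cite{Andrews2007,GhoshDastidarWallner2024}.

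Next I would read off the parameters of the scheme. Here $G(y)=(1-y)^{-m}$ has $\rho_G=1$ and singular exponent $\lambda_G=-m<0$, while $H(z)=A(z)=1-\sqrt{1-4z^2}$ has radius of convergence $\rho_H=\tfrac12$ and, since $\sqrt{1-4z^2}=\sqrt{1-2z}\,\sqrt{1+2z}\sim 2\sqrt{1-2z}$ near $z=\tfrac12$, singular exponent $\lambda_H=\tfrac12\in(0,1)$ with $\tau_H=H(\rho_H)=1$. Therefore $q_c=\rho_G/\tau_H=1$, matching the statement. The factor $M(z)=W(z)$ must be checked to be analytic at $z=\tfrac12$ (it is, being essentially a finite rational function of $z$ and $\sqrt{1-4z^2}$ with no pole there), so it does not affect the location or type of the dominant singularity — exactly the situation covered by the extended scheme~\eqref{extendedscheme}.

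Then the three regimes follow by the same arguments as in the proof of Theorem~\ref{the:compoMain}, adapted to the presence of the analytic prefactor $M(z)$ (which simply cancels in the ratio $[z^n]F(z,qv)/[z^n]F(z,q)$ asymptotically, since $M(\rho)\neq 0$ and, in the supercritical case, $M$ is analytic at the moving singularity $\rho(qv)<\tfrac12$). For $0<q<1$ the scheme is subcritical, $G$ is analytic at $q\tau_H=q<1$, and expanding $F(z,qv)\sim M(\rho_H)\,G(qv)\,\bigl(1+\text{const}\cdot(1-z/\rho_H)^{1/2}\bigr)$ one gets the probability generating function $G'(qv\tau_H)\cdot v/G'(q\tau_H)$ — wait, here returns are marked directly (not via a core size), so the limiting probability generating function is $[v^k]\,G(qv)/G(q)=[v^k]\,(1-q)^m/(1-qv)^m$, the negative binomial $\nb(m,1-q)$. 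For $q=q_c=1$ we invoke~\cite[Theorem 4.1]{BKW2021a} for the extended scheme with $\lambda_G=-m$, $\lambda_H=\tfrac12$: the limit law of $X_n/\sqrt{n}$, suitably scaled, is the Mittag-Leffler law $\operatorname{ML}(\tfrac12,\tfrac m2)$ up to a constant, which by its moment sequence $\E(X^r)=\Gamma(\tfrac m2+\tfrac r2)/\Gamma(\tfrac m2)\cdot 2^{r/2}/\Gamma(\tfrac{r}{2}+\tfrac m2)\cdots$ — more cleanly, comparing moments shows it equals $\chi(m)$, exactly as in the watermelon case (Theorem~\ref{theo:mwatermelons}) with $2m$ replaced by $m$. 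For $q>1$ there is a unique $\rho(q)\in(0,\tfrac12)$ with $qH(\rho)=1$, the singularity becomes movable and analytic in the marking variable, and Bender's theorem~\cite[Propositions IX.6, IX.7]{FS2009} together with Hwang's quasi-power theorem yields the Gaussian limit law with mean and variance linear in $n$.

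The main obstacle I expect is not any single regime but getting the exact generating function right, in particular pinning down $M(z)=W(z)$ and the bookkeeping of empty bridges (``not all colours need to be present'') — the $m$-tuple of sequences of arches automatically allows empty components, but one must be careful that the final never-returning walk is correctly attached and that a walk of length $0$ is (or is not) counted, since an off-by-one in $z$ shifts nothing essential but must be stated correctly. Once the clean form $F(z,v)=M(z)(1-vA(z))^{-m}$ is established with $A(z)=1-\sqrt{1-4z^2}$, everything else is a direct transcription of Theorem~\ref{the:compoMain} and its proof, with the $\chi(m)$ identification of the critical law handled exactly as for the watermelons.
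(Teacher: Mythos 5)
Your overall strategy is the paper's strategy: decompose an $m$-coloured walk into an $m$-tuple of sequences of arches followed by a final never-returning part, recognize the extended scheme $F(z,v)=M(z)\,G\bigl(vA(z)\bigr)$ with $G(y)=(1-y)^{-m}$, $\lambdaG=-m$, $\lambdaH=\tfrac12$, $\tauH=A(\rho)=1$, hence $q_c=1$. But there is one genuine error, and it changes the answer. You assert that the prefactor $M(z)$ is analytic at the dominant singularity and therefore ``does not affect the type of the dominant singularity.'' It is not analytic there. The correct prefactor is $M(z)=W(z)/B(z)$, where $W$ counts all walks and $B$ all bridges (each walk factors into an initial bridge and a never-returning tail); for Dyck steps $M(z)=\sqrt{(1+2z)/(1-2z)}$, which diverges like $(1-2z)^{-1/2}$ at $z=\rho=\tfrac12$. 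The paper records this as $\lambdaM=-\tfrac12$, and it is essential: at criticality the extended scheme of~\cite{BKW2021a} then yields $\tfrac{1}{\sqrt2}\operatorname{ML}(\tfrac12,\tfrac{m-1}{2})=\chi(m)$, whereas an analytic prefactor would give $\operatorname{ML}(\tfrac12,\tfrac m2)$, whose moments match $\chi(m+1)$ — so the parameters $\operatorname{ML}(\tfrac12,\tfrac m2)$ you write down are not consistent with the claimed $\chi(m)$ limit. The case $m=1$ is the sanity check: a $1$-coloured walk is just an arbitrary walk, whose number of returns to zero is half-normal $=\chi(1)$, not Rayleigh $=\chi(2)$; the difference between walks and bridges is precisely this singular prefactor. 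The same oversight would also corrupt the subcritical regime if carried through consistently: with an analytic $M$ the limiting probability generating function would be $vG'(qv)/G'(q)$ (a shifted $\nb(m+1,\cdot)$), and the correct $G(qv)/G(q)=\nb(m,1-q)$ is again a consequence of $M$ carrying the dominant singular term. You do land on $G(qv)/G(q)$, but via an ad hoc remark (``returns are marked directly, not via a core size'') rather than for this reason — in your own scheme $v$ does mark the core size (the number of arches).

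Two smaller points. The theorem is stated for general directed walks: the paper derives $\lambda_B=-\tfrac12$, $B(\rho)=\infty$, hence $A(\rho)=1$ and $q_c=1$ \emph{universally} from~\cite{bafl02}, whereas you specialize to Dyck steps; and for Dyck steps $A(z)=1-\sqrt{1-4z^2}$ has periodic support, so one should substitute $z^2\mapsto z$ (or argue aperiodicity otherwise) before invoking singularity analysis.
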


\begin{proof}[Proof (Sketch)]
We start by showing that returns to zero in $m$-coloured walks satisfy the composition scheme
$F(z,q) = M(z) G(qA(z))$.
First, let $B(z)$ be the generating function of bridges. 
Each bridge can be decomposed into a sequence of minimal bridges (or generalized arches), which have no return to zero between their extremities (yet jumps might cross the $x$-axis). 
Hence, $A(z) = 1-\frac{1}{B(z)}$.
Second, the $m$-tuple corresponding to an $m$-coloured bridge has the generating function $G(A(z))$, with $G(z)=\frac{1}{(1-z)^m}$.
Third, the final part of the walk (that never returns to the $x$-axis)
corresponds to $M(z)=W(z)/B(z)$, since each walk can be factored into an initial bridge and this final part.

Then, as before, the probability generating function (under the Gibbs measure) 
is 
\begin{equation*}
\E(v^{X_n(q)})= \frac{[z^{n} v^k]F(z,qv)}{[z^{n}]F(z,q)}.
\end{equation*}
We get the values $\lambdaG=-m$, $\rhoG=1$, and $\lambdaM=-1/2$. 
Furthermore, for any directed walk model~\cite{bafl02}, $B(z)$ has a critical exponent $\lambda_B=-1/2$, and therefore $\lambda_A = 1/2$.
This also means that $B(z)$ diverges at the singularity, and thus $A(z)$ is equal~to~$1$.
Hence, the scheme is universally critical at $q_c=1$, 
where  we get $\frac{1}{\sqrt{2}}\operatorname{ML}(\frac{1}{2},\frac{m-1}{2})$,  
which can be seen (from its moments) to be the same as the chi distribution $\chi(m)$. 
Other cases follow by singularity~analysis.
\end{proof}
\pagebreak

\section{Conclusion}
We unified the recent results of~\cite{BKW2021a} with the classical results of~\cite{Bender1973,FS2009}
to obtain phase transitions for Gibbs models under the umbrella of composition schemes. 
This allows us to obtain a variety of limit laws, leading to new results, as well as summarizing and generalizing classical results in 
analytic combinatorics. It also explains the universality hidden behind some phase transitions up to now sporadically observed in the literature.
In the full version of this article, we give several extensions of Theorem~\ref{the:compoMain}, 
thus treating many other examples of more general combinatorial constructions.

It is interesting to have an informal physicist look at our results: 
in statistical mechanics, the Gibbs measure can be seen as $q^k =\frac{\exp(-k/T)}{Z(1/T)}$, where $T$ is the temperature of the model.
Accordingly, $T\rightarrow 0$ (i.e., $q$ is very small) gives a frozen ``solid'' phase (typically leading to a discrete distribution),
while $T\rightarrow +\infty$ gives a ``gaseous'' phase (typically leading to a Gaussian distribution),
and, around (or at) a critical temperature $T_c$,  this gives a ``liquid'' phase (where the wild things are: one often observes at this location an unexpected fancy distribution).
Our article is one more illustration of this informal paradigm.

\ifthenelse{\equal{\compilationdestination}{arxiv}}
{\bibliographystyle{cyrbib}}
{\bibliographystyle{plainurl}}
\bibliography{p007-Banderier}

\begin{thebibliography}{10}

\bibitem{Aigner2021}
Florian Aigner.
\newblock \href{https://arxiv.org/abs/1810.08022}{A new determinant for the
  $q$-enumeration of alternating sign matrices}.
\newblock  \textit{Journal of Combinatorial Theory, Series A}, 180:27~pp.,
  2021.

\bibitem{Andrews2007}
George~E. Andrews.
\newblock \href{https://georgeandrews1.github.io/publications.html}{The theory
  of compositions. {IV}: {Multicompositions}}.
\newblock  \textit{The Mathematics Student}, 2007:25--31, 2007.

\bibitem{BD2015}
Cyril Banderier and Michael Drmota.
\newblock \href{http://lipn.univ-paris13.fr/~banderier/Papers/Alg.pdf}{Formulae
  and asymptotics for coefficients of algebraic functions}.
\newblock  \textit{Combinatorics, Probability and Computing}, 24(1):1--53,
  2015.

\bibitem{bafl02}
Cyril Banderier and Philippe Flajolet.
\newblock \href{http://dx.doi.org/10.1016/S0304-3975(02)00007-5}{Basic analytic
  combinatorics of directed lattice paths}.
\newblock  \textit{Theoretical Computer Science}, 281(1-2):37--80, 2002.

\bibitem{BFSS2001}
Cyril Banderier, Philippe Flajolet, Gilles Schaeffer, and Mich{\`e}le Soria.
\newblock \href{https://lipn.univ-paris13.fr/~banderier/Papers/rsa.pdf}{Random
  maps, coalescing saddles, singularity analysis, and {A}iry phenomena}.
\newblock  \textit{Random Structures \& Algorithms}, 19(3-4):194--246, 2001.

\bibitem{BKW2021a}
Cyril Banderier, Markus Kuba, and Michael Wallner.
\newblock \href{http://dx.doi.org/10.48550/arXiv.2103.03751}{Phase transitions
  of composition schemes: {M}ittag-{L}effler and mixed-{P}oisson
  distributions}.
\newblock  \textit{Annals of Applied probability}, 2024.
\newblock 53~pp., to appear.

\bibitem{BOR2019}
Nicholas~R. Beaton, A.~L. Owczarek, and Andrew Rechnitzer.
\newblock \href{http://dx.doi.org/10.37236/8024}{Exact solution of some quarter
  plane walks with interacting boundaries}.
\newblock  \textit{Electronic Journal of Combinatorics}, 26(3 (P53)), 2019.

\bibitem{Bender1973}
Edward~A. Bender.
\newblock \href{http://dx.doi.org/10.1016/0097-3165(73)90038-1}{Central and
  local limit theorems applied to asymptotic enumeration}.
\newblock  \textit{Journal of Combinatorial Theory, Series A}, 15:91--111,
  1973.

\bibitem{BGR2009}
Alexei Borodin, Vadim Gorim, and Eric~M. Rains.
\newblock \href{http://dx.doi.org/10.1007/s00029-010-0034-y}{$q$-distributions
  on boxed plane partitions}.
\newblock  \textit{Selecta Mathematica}, 16(4):731--789, 2010.

\bibitem{BousquetMelou06}
Mireille Bousquet-M\'elou.
\newblock \href{http://dx.doi.org/10.1088/1742-6596/42/1/005}{Three osculating
  walkers}.
\newblock  \textit{Journal of Physics: Conference Series}, 42:35--46, 2006.

\bibitem{BEO1998}
Richard Brak, John~W. Essam, and Aleksander~L. Owczarek.
\newblock \href{http://dx.doi.org/10.1023/B:JOSS.0000026731.35385.93}{New
  results for directed vesicles and chains near an attractive wall}.
\newblock  \textit{Journal of Statistical Physics}, 93:155--192, 1998.

\bibitem{BEO2001}
Richard Brak, John~W. Essam, and Aleksander~L. Owczarek.
\newblock \href{http://dx.doi.org/10.1023/A:1004819507352}{Scaling analysis for
  the adsorption transition in a watermelon network of $n$ directed
  non-intersecting walks}.
\newblock  \textit{Journal of Statistical Physics}, 102:997--1017, 2001.

\bibitem{ChelikavadaPanzo2023}
Aksheytha Chelikavada and Hugo Panzo.
\newblock \href{https://arxiv.org/abs/2311.04623}{Limit theorems for fixed
  point biased permutations avoiding a pattern of length three}.
\newblock  \textit{arXiv}, 2023.

\bibitem{CiucuKrattenthaler2002}
Mihai Ciucu and Christian Krattenthaler.
\newblock \href{http://dx.doi.org/10.1006/jcta.2002.3288}{Enumeration of
  lozenge tilings of hexagons with cut off corners}.
\newblock  \textit{Journal of Combinatorial Theory, Series A}, 100:201--231,
  2002.

\bibitem{DrmotaGimenezNoy2011}
Michael Drmota, Omer Gim{\'e}nez, and Marc Noy.
\newblock \href{http://dx.doi.org/10.1016/j.jcta.2011.04.010}{Degree
  distribution in random planar graphs}.
\newblock  \textit{Journal of Combinatorial Theory. Series A},
  118(7):2102--2130, 2011.

\bibitem{DrmotaSoria95}
Michael Drmota and Mich{\`e}le Soria.
\newblock \href{http://dx.doi.org/10.1016/0304-3975(94)00294-S}{Marking in
  combinatorial constructions: generating functions and limiting
  distributions}.
\newblock  \textit{Theoretical Computer Science}, 144(1-2):67--99, 1995.

\bibitem{DrmotaSoria97}
Michael Drmota and Mich{\`e}le Soria.
\newblock \href{http://dx.doi.org/10.1137/S0895480194268421}{Images and
  preimages in random mappings}.
\newblock  \textit{SIAM Journal on Discrete Mathematics}, 10(2):246--269, 1997.

\bibitem{Boltz1}
Philippe Duchon, Philippe Flajolet, Guy Louchard, and Gilles Schaeffer.
\newblock \href{http://dx.doi.org/10.1017/S0963548304006315}{Boltzmann samplers
  for the random generation of combinatorial structures}.
\newblock  \textit{{Combinatorics, Probability and Computing}},
  13(4-5):577--625, 2004.

\bibitem{Elizalde04}
Sergi Elizalde.
\newblock \href{http://dx.doi.org/10.37236/1804}{Multiple pattern avoidance
  with respect to fixed points and excedances}.
\newblock  \textit{Electronic Journal of Combinatorics}, 11(1):40~pp., 2004.

\bibitem{Feierl2012}
Thomas Feierl.
\newblock \href{http://dx.doi.org/10.1088/1751-8113/45/9/095003}{The height of
  watermelons with wall}.
\newblock  \textit{Journal of Physics A: Mathematical and Theoretical}, 45(9),
  2012.

\bibitem{Feierl2009}
Thomas Feierl.
\newblock \href{http://dx.doi.org/10.1016/j.ejc.2012.07.021}{The height and
  range of watermelons without wall}.
\newblock  \textit{European Journal of Combinatorics}, 34(1):138--154, 2013.

\bibitem{Feierl2014}
Thomas Feierl.
\newblock \href{http://dx.doi.org/10.1002/rsa.20467}{{Asymptotics for the
  number of walks in a Weyl chamber of type B}}.
\newblock  \textit{Random Structures \& Algorithms}, 45(2):261--305, 2014.

\bibitem{FillFlajoletKapur2005}
James~Allen Fill, Philippe Flajolet, and Nevin Kapur.
\newblock \href{http://dx.doi.org/10.1016/j.cam.2004.04.014}{Singularity
  analysis, {Hadamard} products, and tree recurrences}.
\newblock  \textit{Journal of Computational and Applied Mathematics},
  174(2):271--313, 2005.

\bibitem{Fisher1984}
Michael~E. Fisher.
\newblock \href{http://dx.doi.org/10.1007/BF01009436}{Walks, walls, wetting,
  and melting}.
\newblock  \textit{Journal of Statistical Physics}, 34:667--729, 1984.

\bibitem{FS2009}
Philippe Flajolet and Robert Sedgewick.
\newblock \href{http://algo.inria.fr/flajolet/Publications/book.pdf}{
  \textit{{Analytic Combinatorics}}}.
\newblock Cambridge University Press, 2009.

\bibitem{FlajoletSoria93}
Philippe Flajolet and Mich\`ele Soria.
\newblock \href{http://dx.doi.org/10.1016/0012-365X(93)90364-Y}{General
  combinatorial schemas: {G}aussian limit distributions and exponential tails}.
\newblock  \textit{Discrete Mathematics}, 114(1-3):159--180, 1993.

\bibitem{GhoshDastidarWallner2024}
Manosij Ghosh~Dastidar and Michael Wallner.
\newblock \href{https://arxiv.org/pdf/2402.17849}{Bijections and congruences
  involving lattice paths and integer compositions}.
\newblock  \textit{arXiv}, 2024.
\newblock A short version of this work was published in the proceedings of
  \href{https://cgi.cse.unsw.edu.au/~eptcs/paper.cgi?GASCom2024.22}{GASCOM
  2024}.

\bibitem{Gourdon98}
Xavier Gourdon.
\newblock \href{http://dx.doi.org/10.1016/S0012-365X(97)00115-5}{Largest
  component in random combinatorial structures}.
\newblock  \textit{Discrete Mathematics}, 180(1-3):185--209, 1998.

\bibitem{GKP1994}
Ronald~L. Graham, Donald~E. Knuth, and Oren Patashnik.
\newblock \href{https://www-cs-faculty.stanford.edu/~knuth/gkp.html}{
  \textit{{Concrete Mathematics}}}.
\newblock Addison-Wesley Publishing Company, second edition, 1994.

\bibitem{HopkinsOuvry2021}
Brian Hopkins and St{\'e}phane Ouvry.
\newblock \href{http://dx.doi.org/10.1007/978-3-030-67996-5_16}{Combinatorics
  of multicompositions}.
\newblock In  \textit{Combinatorial and additive number theory IV}, pages
  307--321. Springer, 2021.

\bibitem{HopkinsLazarLinusson2021}
Sam Hopkins, Alexander Lazar, and Svante Linusson.
\newblock \href{http://dx.doi.org/10.1016/j.ejc.2023.103760}{On the
  $q$-enumeration of barely set-valued tableaux and plane partitions}.
\newblock  \textit{European Journal of Combinatorics}, 2021.

\bibitem{Hwang1998}
Hsien-Kuei Hwang.
\newblock \href{http://dx.doi.org/10.1006/eujc.1997.0179}{{On convergence rates
  in the central limit theorems for combinatorial structures}}.
\newblock  \textit{European Journal of Combinatorics}, 19:329--343, 1998.

\bibitem{Krattenthaler2006}
Christian Krattenthaler.
\newblock \href{http://dx.doi.org/10.1088/1742-6596/42/1/017}{Watermelon
  configurations with wall interaction: exact and asymptotic results}.
\newblock  \textit{Journal of Physics: Conference Series}, 42:179--212, 2006.

\bibitem{KrattenthalerGuttmannViennot2000}
Christian Krattenthaler, Anthony~J. Guttmann, and Xavier~G. Viennot.
\newblock \href{http://dx.doi.org/10.1088/0305-4470/33/48/318}{Vicious walkers,
  friendly walkers and {Y}oung tableaux {II}: with a wall}.
\newblock  \textit{Journal of Physics A: Mathematical and General},
  33:8835--8866, 2000.

\bibitem{Kropf2017}
Sara Kropf and Stephan Wagner.
\newblock \href{http://dx.doi.org/10.37236/6373}{On {$q$}-quasiadditive and
  {$q$}-quasimultiplicative functions}.
\newblock  \textit{Electronic Journal of Combinatorics}, 24(1):Paper No. 1.60,
  22 pages, 2017.

\bibitem{Mallows1957}
Colin~L. Mallows.
\newblock \href{http://dx.doi.org/10.2307/2333244}{{Non-null ranking models
  I}}.
\newblock  \textit{Biometrika}, 44:114--130, 1957.

\bibitem{OP2019}
Aleksander~L. Owczarek and Thomas Prellberg.
\newblock \href{http://dx.doi.org/10.1063/1.5083149}{Exact solution of pulled,
  directed vesicles with sticky walls in two dimensions}.
\newblock  \textit{Journal of Mathematical Physics}, 60:8~pp., 2019.

\bibitem{Roitner2020b}
Valerie Roitner.
\newblock
  \href{http://bica.the-ica.org/Volumes/89/Reprints/BICA2020-01-Reprint.pdf}{Contacts
  and returns in 2-watermelons without wall}.
\newblock  \textit{Bulletin of the Institute of Combinatorics and its
  Applications}, 89, 2020.

\bibitem{Stufler2022}
Benedikt Stufler.
\newblock \href{https://arxiv.org/abs/2204.06982}{Gibbs partitions: a
  comprehensive phase diagram}.
\newblock  \textit{Annales de l'Institut Henri Poincar\'e - Probabilit\'es et
  Statistiques}, 2023.
\newblock To appear.

\bibitem{TOR2014}
Rami Tabbara, Aleksander~L. Owczarek, and Andrew Rechnitzer.
\newblock \href{http://dx.doi.org/10.1088/1751-8113/47/1/015202}{{An exact
  solution of two friendly interacting directed walks near a sticky wall}}.
\newblock  \textit{Journal of Physics A: Mathematical and Theoretical}, 47(1),
  2014.

\bibitem{Vella2003}
Antoine Vella.
\newblock \href{http://dx.doi.org/10.37236/1690}{Pattern avoidance in
  permutations: {Linear} and cyclic orders}.
\newblock  \textit{Electronic Journal of Combinatorics}, 9(2):43~pp., 2003.

\bibitem{Wallner20halfnormal}
Michael Wallner.
\newblock \href{http://dx.doi.org/10.1016/j.ejc.2020.103138}{A half-normal
  distribution scheme for generating functions}.
\newblock  \textit{European Journal of Combinatorics}, 87:103138, 2020.

\bibitem{Wu2022}
Shuang Wu.
\newblock
  \href{http://dx.doi.org/10.1088/1751-8121/ac93cd}{{Sachdev--Ye--Kitaev model
  with an extra diagonal perturbation: phase transition in the eigenvalue
  spectrum}}.
\newblock  \textit{Journal of Physics A: Mathematical and Theoretical},
  55(41):415207, 2022.

\end{thebibliography}

\ifthenelse{\equal{\compilationdestination}{arxiv}}
{First version: arXiv, November 28, 2023.\\
This final version from July 17, 2024 corresponds to the article published by LIPIcs (with a different style for the bibliography).
}
{}
\end{document}